%

\documentclass[11pt]{article}

\usepackage{amssymb, amsmath,amsthm, mathrsfs, bbm, MnSymbol}
\usepackage[dvips]{color}
\usepackage{eucal}

\topmargin 0in
\oddsidemargin 0in
\evensidemargin 0in
\textwidth 6.2in
\textheight 8.5in
\parskip .1in
\usepackage{graphicx}
\usepackage{epstopdf}

\newtheorem{theorem}{Theorem}
\newtheorem{proposition}[theorem]{Proposition}

\newtheorem{corollary}[theorem]{Corollary}

\theoremstyle{remark}
\newtheorem*{remark}{Remark}
\newtheorem{example}{Example}

\newenvironment{keywords}{\par}{\par}

\begin{document}

\title{Reaction-diffusion on metric graphs and conversion probability}

\author{ Renato Feres\footnote{Dept. of Mathematics, Washington University,
Campus Box 1146, St. Louis, MO 63130}, Matt Wallace\footnotemark[1]}


\maketitle

\begin{abstract}
We consider the following type of reaction-diffusion systems, motivated by a specific problem in the area of heterogeneous catalysis: 

A pulse of reactant gas of species $A$ is injected into a domain $U\subset \mathbb{R}^d$ that we refer to as the {\em reactor}. This domain is permeable to gas diffusion and chemically inert, except for a few relatively small regions, referred to as {\em active sites}. At the active sites, an irreversible first-order reaction $A\rightarrow B$ can occur. Part of the boundary of $U$ is designated as the {\em reactor exit}, out of which a mixture of reactant and product gases can be collected and analyzed for their composition. The rest of the boundary is chemically inert and impermeable to diffusion; we assume that instantaneous normal reflection occurs there. 

The central problem is then: Given the geometric parameters defining the configuration of the system, such as the shape of $U$, position of gas injection, location and shape of the active sites, and location of the reactor exit, find the (molar) fraction of product gas in the mixture after the reactor $U$ is fully evacuated. Under certain assumptions, this fraction can be identified with the {\em reaction probability}\----that is, with the probability that a single diffusing molecule of $A$ reacts before leaving through the exit. 

More specifically, we are interested in how this reaction probability depends on the rate constant $k$ of the reaction $A\rightarrow B$. After giving a stochastic formulation of the problem, we consider domains having the form of a network of thin tubes in which the active sites are located at some of the junctures. The main result of the paper is that in the limit, as the thin tubes approach curves in $\mathbb{R}^d$, reaction probability converges to functions of the point of gas injection that can be described fairly  explicitly in their dependence on the (appropriately rescaled) parameter $k$. Thus, we can use the simpler processes on metric graphs as model systems for more complicated behavior. We illustrate this with a number of analytic examples and one numerical example. \end{abstract}

\begin{keywords}
Reaction-diffusion, metric graphs.
\end{keywords}

\section{Introduction} 
Consider the system described schematically in Figure \ref{domain}. The bounded domain $U\subset \mathbb{R}^d$ represents a chemical reactor packed with inert granular particles permeable to gas diffusion. At the point $x\in U$, one injects a small pulse of gas molecules of species $A$. The pulse diffuses in $U$ with a certain diffusion coefficient $\mathcal{D}$ before escaping through the reactor exit, marked in the figure with a dashed line and denoted $\partial U_a$.  

Now suppose a certain small subset $\mathcal{A}$ of $U$ contains a catalytically active material promoting the irreversible first-order reaction $A\rightarrow B$ with {\em rate constant} $k$. We call $\mathcal{A}$ the {\em active zone} Then a certain fraction of the molecules in the initial pulse may interact with $\mathcal{A}$ and convert to $B$ before exiting the reactor. This number is called the {\em fractional conversion}. Motivated by certain problems in heterogeneous catalysis (see \cite{FYMB09, FCYG09} and references therein) we seek to determine the fractional conversion in terms of the rate constant $k$, the diffusion coefficient $\mathcal{D}$ and the geometric configuration of $\mathcal{A}$ and $U$.

\begin{figure}[htbp]
\begin{center}
 \includegraphics[width=3.0in]{./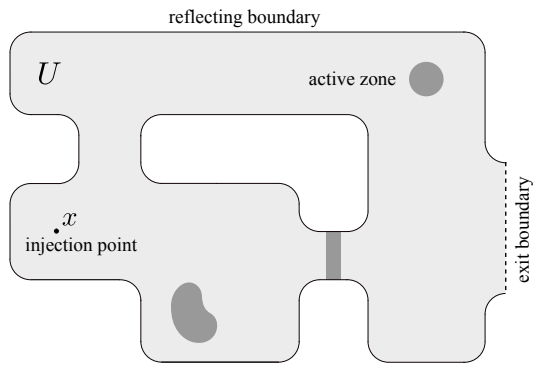}
\caption{\small  Schematic description of the chemical reactor. $U$ is the domain of diffusion; the shaded regions represent active sites, whose union is denoted $\mathcal{A}$. The boundary of $U$ is the union of a reflecting part $\partial U_r$ and an absorbing part $\partial U_a$.  The point $x$ is where reactant gas of species $A$ is injected. We are interested in the probability that a gas molecule starting at $x$ and undergoing Brownian motion will react before reaching $\partial U_a$.}
\label{domain}
\end{center}
\end{figure} 

The practical concern is that $k$ may be difficult to determine experimentally, because the reaction $A\rightarrow B$ may involve, at the microscopic level, a complex sequence of steps such as adsorption and surface reaction. On the other hand, measuring the fractional conversion is comparatively simple. Similarly, one can carefully control in a lab the size and shape of the reactor, the size and shape of the active zone, and the nature of the particulate matter supporting the diffusion (hence the diffusion coefficient $\mathcal{D}$). Determining $k$ in terms of these other variables is therefore of considerable interest.  

We intend to set up a probabilistic model for determining $k$. Let us briefly describe what physical assumptions we impose for the model to work. First, we assume that the pressure within $U$ is low enough, and the injected pulse is narrow enough, that the diffusion follows the {\it Knudsen regime.} In other words, the diffusion is driven at the microscopic level by tiny collisions between diffusing molecules and the particulate matter making up the reactor bed\----not by collisions between molecules themselves. Second, we assume the pulse is small enough that any reactions occurring within $U$ during the course of the experiment negligibly alter the composition of the active sites. These assumptions are physically reasonable and can in fact be arranged for in a lab; see \cite{GYZF10} or \cite{FSYW15} for more information.
 
With the set-up just described, write $\alpha(x)$ for the fractional conversion, or $\alpha_k(x)$ when necessary to emphasize dependence on the rate constant $k$. Because of our assumption about the Knudsenian character of the diffusion, we can identify $\alpha_k(x)$ with the {\em reaction probability;} that is, with the probability that a single molecule, entering the chamber at $x$ and undergoing reflecting Brownian motion in $U$ with diffusion coefficient $\mathcal{D}$, converts to $B$ before hitting $\partial U_a$. This identification is possible because every molecule's path in the reactor is independent of every other's. Therefore a single pulse of $N$ molecules is equivalent to $N$ independent pulses of one molecule. For this reason, we can model the reaction $A\rightarrow B$ by killing a reflecting Brownian motion (RBM) in $U$ with a rate function of the form $r=kq$ where $k$ is the reaction rate and $q$ is the indicator function on $\mathcal{A}$, or perhaps a smooth approximation thereof.  

By RBM in $U$, we mean a diffusion process $(X_t,\mathbb{P}_x)$ with sample paths in $\overline{U}$ and satisfying the stochastic differential equation 
\begin{equation} X_t - X_0  = \sigma B_t + \int_0^t \nu(X_s) \mathrm{d}\ell_s \label{eq:sk} \end{equation}
where $\sigma=\sqrt{2\mathcal{D}}$, $B_t$ is an ordinary $d$-dimensional Brownian motion, $\nu$ is the inward-pointing unit normal vector field on $\partial U$, and $\ell_s$ is a continuous increasing process (called the boundary local time) that increases only on $\{t : X_t\in\partial U\}$. The SDE \eqref{eq:sk} is called the {\em Skhorohod} or {\em semimartingale decomposition} of $(X_t)$. See, for example, \cite{BH91, C93} for more information about RBM's  that have a semimartingale decomposition. As for killing a diffusion, see \cite{RW1}. 

The existence of an RBM $(X_t)$ in $U$ with a decomposition \eqref{eq:sk} requires that $\partial U$ satisfy certain modest smoothness conditions. For example, $C^2$ smoothness is more than sufficient to ensure that \eqref{eq:sk} has a pathwise unique solution; see \cite{BB06, BB08, LS84, LS84, S87}. See \cite{IW} for general notions about stochastic differential equations. Henceforth we shall assume $\partial U$ is at least $C^2$ unless otherwise stated. Furthermore, we assume the first hitting time of $(X_t)$ to $\partial U_a$ is finite almost surely. As for $\mathcal{A}$, we assume for now that it's a compact subset of $U$ with a regular boundary, and impose further conditions when necessary. 

It will be convenient to introduce the {\em survival function} $\psi_k(x)$, defined as the probability that Brownian motion starting at $x$ reaches $\partial U_a$ without being killed (that is, without reacting). We also write $\psi(x)$ when $k$ is understood. Then the reaction probability we seek is $\alpha_k(x)=1-\psi_k(x)$. Our model for the reaction and diffusion within $U$ entails that \begin{equation}\label{eq:psi} \psi_k(x) = \mathbb{E}_x\left[ \exp\left\{ -\int_0^T kq(X_s)\mathrm{d}s \right\} \right] \end{equation} where $T$ is the first hitting time of $(X_t)$ to $\partial U_a$, that is $T:=\inf\{t>0\,:\,X_t\in\partial U_a\}$.

Functionals such as the right-hand side of \eqref{eq:psi} are well-known and have the following physical interpretation in terms of our catalysis problem: the probability that a gas molecule injected at $x$ and following the sample path $\omega$  {\em hasn't} reacted decreases exponentially  with the amount of time spent in the chemically active region  $\mathcal{A}$, the reaction constant $k$ being the exponential rate. Thus the expression $\exp\left\{-k\int_0^{T(\omega)} q(X_s(\omega))\, ds\right\}$ represents the probability, for that sample path, that the molecule does not react by the time it exits the reactor, and then $\psi_k(x)$ is the expected value of this probability. 

It is also well-known that $\psi_k(x)$ can be expressed as the solution to an elliptic boundary value problem in $U$. Our numerical illustration below is based on this fact:

\begin{proposition}
Suppose that $u$ is a function which is bounded and continuous in $\overline{U}$, satisfies \[ \mathcal{D}\Delta u -kqu=0\] in $U\setminus \partial \mathcal{A}$, together with the boundary conditions $\frac{\partial u}{\partial n}=0$ on $\partial U_r$ and  $u=1$ on $\partial U_a$. Suppose furthermore that $\partial\mathcal{A}$ is a set of zero potential for RBM in $\overline{U}$. Then $u=\psi$ within $U$. 
\end{proposition}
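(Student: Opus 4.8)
The plan is to verify that $u$ coincides with the Feynman--Kac functional \eqref{eq:psi} defining $\psi$ by exhibiting an appropriate martingale and applying optional stopping. Introduce the multiplicative functional $A_t := \exp\{-\int_0^t kq(X_s)\,\mathrm{d}s\}$, which satisfies $\mathrm{d}A_t = -kq(X_t)A_t\,\mathrm{d}t$ and $0\le A_t\le 1$, and consider the candidate $M_t := u(X_t)A_t$. The goal is to show that $(M_{t\wedge T})$ is a martingale; granting this, optional stopping at $T$ gives $u(x) = M_0 = \mathbb{E}_x[M_T] = \mathbb{E}_x[u(X_T)A_T]$, and since $X_T\in\partial U_a$ forces $u(X_T)=1$ by the boundary condition, the right-hand side is exactly $\mathbb{E}_x[A_T]=\psi(x)$.

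To establish the martingale property I would apply It\^o's formula to $M_t$ using the Skorohod decomposition \eqref{eq:sk}. Writing $\sigma^2=2\mathcal{D}$ and using $\mathrm{d}\langle X^i,X^j\rangle_t=\sigma^2\delta_{ij}\,\mathrm{d}t$ together with $\mathrm{d}X_t=\sigma\,\mathrm{d}B_t+\nu(X_t)\,\mathrm{d}\ell_t$, the product rule yields
\[ \mathrm{d}M_t = A_t\,\sigma\,\nabla u(X_t)\cdot \mathrm{d}B_t + A_t\big(\nabla u(X_t)\cdot\nu(X_t)\big)\,\mathrm{d}\ell_t + A_t\big(\mathcal{D}\Delta u(X_t)-kq(X_t)u(X_t)\big)\,\mathrm{d}t. \]
The three terms are then disposed of in turn. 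The $\mathrm{d}t$-term vanishes because $u$ solves $\mathcal{D}\Delta u-kqu=0$ off $\partial\mathcal{A}$. The $\mathrm{d}\ell_t$-term vanishes because $\ell$ increases only while $X_t\in\partial U$, and for $t<T$ the process visits only the reflecting part $\partial U_r$, on which $\nabla u\cdot\nu=0$ by the Neumann condition $\partial u/\partial n=0$; the absorbing part contributes nothing since the process is stopped at $T$. What remains is the stochastic integral against $\mathrm{d}B_t$, which is a local martingale; since $u$ is bounded, $0\le A_t\le 1$, and $T<\infty$ almost surely, a standard localization-and-dominated-convergence argument upgrades this to a genuine uniformly integrable martingale on $[0,T]$ and justifies the optional stopping step above.

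The main obstacle is that It\^o's formula as just used requires $u$ to be $C^2$, whereas $u$ is only assumed continuous across $\partial\mathcal{A}$ and solves the PDE in $U\setminus\partial\mathcal{A}$; across $\partial\mathcal{A}$ the coefficient $q$ jumps, so the second derivatives of $u$ are at best discontinuous there and a naive application of It\^o's formula is not justified. This is precisely where the hypothesis that $\partial\mathcal{A}$ has zero potential for RBM enters: it guarantees that the occupation time $\int_0^T \mathbbm{1}_{\partial\mathcal{A}}(X_s)\,\mathrm{d}s$ vanishes almost surely and that no singular local-time contribution along $\partial\mathcal{A}$ survives, so that the smooth calculus above remains valid after approximating $u$ by $C^2$ functions agreeing with it off an arbitrarily small neighborhood of $\partial\mathcal{A}$ and passing to the limit. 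Making this approximation rigorous\----controlling the error terms uniformly and invoking the zero-potential property to send them to zero\----is the one genuinely technical point; the remainder is the bookkeeping sketched above.
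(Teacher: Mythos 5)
The paper contains no proof of this proposition: it is stated as ``well-known,'' and the analogous Corollary in the section on diffusions with killing is dispatched with ``standard, see \cite{B98}.'' Your skeleton---It\^o's formula applied to $M_t=u(X_t)\exp\{-\int_0^t kq(X_s)\,\mathrm{d}s\}$ via the Skorohod decomposition \eqref{eq:sk}, the Neumann condition killing the $\mathrm{d}\ell_t$-term, the PDE killing the $\mathrm{d}t$-term, and boundedness of $u$ plus $T<\infty$ a.s.\ justifying optional stopping---is exactly that standard argument, and that part of your write-up is correct.

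The gap is in the one step you yourself flag as technical, and it cannot be closed as sketched. You claim the zero-potential hypothesis guarantees that ``no singular local-time contribution along $\partial\mathcal{A}$ survives.'' On its standard reading, zero potential means the expected occupation time of $\partial\mathcal{A}$ vanishes, and that controls only $\mathrm{d}t$-integrals; it says nothing about local time. Quantitatively: the time $X$ spends in the $\varepsilon$-neighborhood of a smooth hypersurface is of order $\varepsilon$ (its $\varepsilon^{-1}$-normalization converges to the surface local time, which is strictly positive once the surface is hit), while your mollified approximants $u_n$ have $|\Delta u_n|$ of order $\varepsilon^{-1}$ times the normal-derivative jump $[\partial_n u]$ on that neighborhood. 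Hence the error term in your limiting argument converges not to zero but to the Tanaka-type correction $\tfrac12\int_0^{T}[\partial_n u](X_s)\,\mathrm{d}L_s^{\partial\mathcal{A}}$. Since the proposition assumes only continuity of $u$ across $\partial\mathcal{A}$, such a jump is not excluded, and when it is present $u\neq\psi$: for instance, take $U=B(0,2)\subset\mathbb{R}^2$ with $\partial U_a=\partial U$ and $\mathcal{A}=\overline{B(0,1)}$; the radial functions equal to $a\,I_0\bigl(\sqrt{k/\mathcal{D}}\,r\bigr)$ for $r\le 1$ and to $\alpha+\beta\log r$ for $1\le r\le 2$, matched continuously at $r=1$ and equal to $1$ at $r=2$, form a one-parameter family satisfying every stated hypothesis (the circle $r=1$ has zero potential), yet at most one member is $\psi$. (The alternative reading of ``zero potential'' as ``polar'' would rescue your deduction, but it is vacuous here: the process must cross $\partial\mathcal{A}$ whenever it enters $\mathcal{A}$.) So the missing ingredient is not an approximation estimate but the matching of normal derivatives across $\partial\mathcal{A}$---equivalently, that $\mathcal{D}\Delta u=kqu$ holds distributionally on all of $U$, which gives $u\in W^{2,p}_{\mathrm{loc}}(U)$ by elliptic regularity. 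With that added (it does hold for $\psi$ itself, and is presumably what the authors intend when they defer existence and regularity questions ``elsewhere''), your mollification scheme does close: the residual error is a $\mathrm{d}t$-integral of an $L^p$-bounded quantity over a shrinking neighborhood of $\partial\mathcal{A}$, and the zero-potential hypothesis is then exactly what sends it to zero. As written, however, the step ``zero potential $\Rightarrow$ no surviving local-time term'' is false, and this is a genuine gap---one that reflects a thinness in the proposition's stated hypotheses as much as in your argument.
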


\begin{remark} Let $S=\inf\{t>0\,:\,X_t\in\mathcal{A}\}$ be the first hitting time of $X$ to $\mathcal{A}$. Then $\int_0^T kq(X)\mathrm{d}s$ is only positive on $(S<T)$ = the event that $X_t$ hits $\mathcal{A}$ at all before leaving through $\partial U_a$. Accordingly, letting $k\rightarrow\infty$ reduces \eqref{eq:psi} to $\psi_\infty(x)=\mathbb{P}_x\left[S<T\right]$ = the probability that, starting from $x\in U$, the RBM $X_t$ escapes through $\partial U_a$ without ever hitting $\mathcal{A}$. 
We write $P_{\mathrm{h}}(x)=1-\psi_\infty(x)$ for the complementary probability that $X_t$ hits $\mathcal{A}$ at all before leaving. This last probability can also be determined by solving a boundary value problem. Namely, if $v(x)$ solves $\Delta v=0$ in $V:=U\setminus\mathcal{A}$, together with the boundary conditions $u=1$ on $\partial\mathcal{A}$, $u=0$ on $\partial U_a$, and $\mathbf{n}\cdot\nabla v=0$ on $\partial V\setminus [\partial U_a\cup\partial\mathcal{A}]$ then $v(x)=P_{\mathrm{h}}(x)$. \end{remark}

Elsewhere we will address the existence of solutions satisfying the conditions in Proposition 1. For now, we simply assume that a solution exists and can be approximated using the Finite Element (FE) method. This is not a serious defect since we only intend to use Proposition 1 to corroborate numerically the predicted form of $\alpha_k(x)$ in Theorem 2 below. 

To understand what to expect, then, consider the case where $U$ is a line or a metric graph, and $\mathcal{A}=\{a\}$, a single point. Then simple manipulations involving the strong Markov property (described in detail below) show that $\alpha_k(x)$ factors as $\alpha_k(x)=P_{\mathrm{h}}(x)f(x,k)$ where $P_{\mathrm{h}}(x)$ is the hitting probability for $\mathcal{A}$ (described in the Remark under Proposition 1) and $f(x,k)$ is the reaction probability conditional on starting from $a$. Clearly $P_{\mathrm{h}}(x)$ doesn't depend on $k$, so that $f(k,x)$ is the quantity of interest. In this case, $f(x,k)$ is simply equal to $\alpha_k(a)$, and it follows from basic properties of local times (described in detail below) that \begin{equation}\label{typical}f(x,k)=\frac{\lambda k}{1+\lambda k}\end{equation} where $\lambda$ is a constant that doesn't depend on $k$. For this reason we regard the formula \eqref{typical} as ``typical'' behavior for situations where $\mathcal{A}$ can reasonably be thought of as a single point in the state space. This involves some coarse-graining, because single points are polar for Brownian motion. 

\vspace{0.1in}
\begin{figure}[htbp]
\begin{center}
 \includegraphics[width=2.2in]{./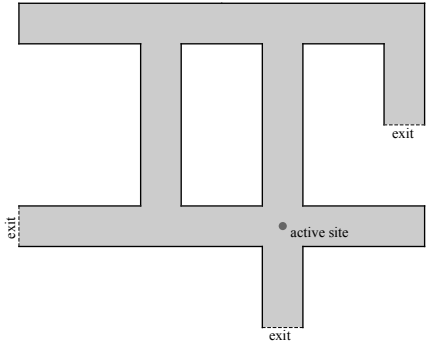}
\caption{\small  Example in dimension $2$ of a reactor domain with a small active site. }
\label{maize}
\end{center}
\end{figure} 

To illustrate this behavior, consider the domain in Figure \ref{maize}. Taking the length of the shortest side of the polygonal region as unit, the radius of the disc-shaped  active site is $0.1$. Here, $\mathcal{A}$ is not a single point, but is nevertheless small enough relative to $U$ that we can reasonably hope that a factorization of the form $\alpha_k(x)=P_{\mathrm{h}}(x)f(x,k)$ holds. That is, if we {\em define} $f(x,k)$ by \begin{equation} \label{typical2} f(x,k) :=  \frac{\alpha_k(x)}{P_{\mathrm{h}}(x)} \end{equation} and then compute the right-hand members of \eqref{typical2} using the boundary value problems determining $\alpha_k(x)$ and $P_{\mathrm{h}}(x)$, then we expect to see the behavior described in \eqref{typical}. 

To this end, we computed $f(x,k)$ as defined in \eqref{typical2} for a fixed $x$ and $50$ different equally-spaced values of $k\in[0,100]$ using FEniCS, and then ``solved for $\lambda$'' under the working assumption that \eqref{typical} holds. That is, we defined \[ \lambda := \frac{1}{k}\frac{f(x,k)}{1-f(x,k)} =\frac{1}{k}\frac{\psi_k(x)/P_{\mathrm{h}}(x)}{1-\psi_k(x)/P_{\mathrm{h}}(x)}\] and checked $\lambda$'s dependence on $k$. As expected, the resulting $\lambda$'s exhibited little dependence on $k$ . Denoting by $\lambda_{\text{\tiny max}}$ and $\lambda_{\text{\tiny min}}$ the maximum and minimum values of $\lambda$ over the range of $k$'s we looked at, we found $(\lambda_{\text{\tiny max}}-\lambda_{\text{\tiny min}})/\lambda_{\text{\tiny min}}< 0.005$ with mean value $\lambda=0.02342$. 

It will be shown that the above expression \eqref{typical} is indeed a typical approximation  of $f(x,k)$ when the active region is a small single site. Roughly speaking, $\lambda$ is the amount of time accumulated at $a$, starting from $a$, before departing through the reactor exit. In particular, $\lambda$ depends only on the geometry of the reactor and the diffusion coefficient, and not $k$. For more general active site configurations, more complicated but related approximation formulas apply.
  
We call the regions to which our main result will apply {\em fat graphs}. These are graph-like domains comprising a number of thin tubes and junctures that converge, in an appropriate sense, to an underlying metric graph $\Gamma=(\mathcal{V},\mathcal{E})$. Here, $\mathcal{E}$ denotes the edges of $\Gamma$, and $\mathcal{V}$ the vertices. Then each tube in the domain corresponds to an edge $e\in\mathcal{E}$, and each juncture corresponds to a vertex $v\in\mathcal{V}$. We assume that the active region $\mathcal{A}$ is concentrated in the juncture regions. The precise definitions are given in section \ref{fatgraph}. For now, figure \ref{fatgraphparameters} conveys a rough idea. 

\begin{figure}[htbp]
\begin{center}
 \includegraphics[width=3.5in]{./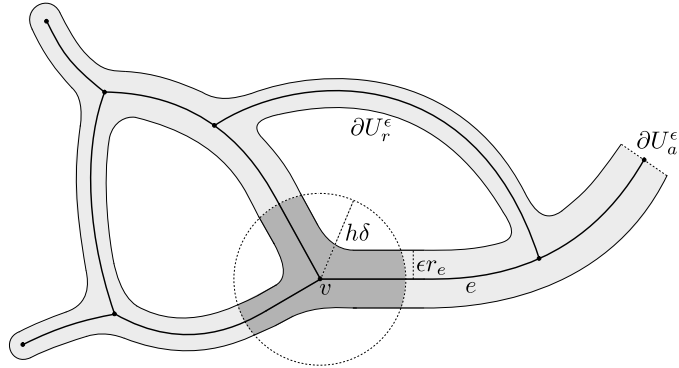}
\caption{\small A fat graph in $\mathbb{R}^d$ with its skeleton graph $\Gamma$.  Edges have relative radii $r_e$ and $\epsilon$ is a scaling parameter. Active sites, defined as regions where reaction can occur, have a relative radius $\delta$ with scaling parameter $h$. The reaction constant must also be scaled as $k^h=k/h$ as $h\downarrow 0$.}
\label{fatgraphparameters}
\end{center}
\end{figure} 
 
There are two scale parameters involved in the approximation: $\epsilon>0$, which gives the thickness of the tubes in the domain, and $h>0$, which gives the radius of the active zone around each active site. As $\epsilon\downarrow 0$, the domain collapses to the skeleton graph, and as $h\downarrow 0$, active sites collapse to vertices. At the same time, reaction activity, given by the rate constant $k^h=k/h$ must increase accordingly. 
  
Thus let $f^{\epsilon, h}(x,k/h)$ denote the quantity introduced above that gives the dependence of the reaction probability on the rate constant. That is, 
\[ f^{\epsilon, h}(x, k/h) := \frac{\psi_k(x)}{P_{\mathrm{h}}(x)}.\]
The main result is then the following. 

\begin{theorem} \label{theorem1}
Let $U^\epsilon$ be a bounded fat graph whose  skeleton metric graph $\Gamma$ has  finitely many vertices and edges. Let $\mathcal{C}$ be the set  of vertices corresponding to active sites of $U^\epsilon$ and $\delta$ the relative radius of the active sites. Define $\kappa:=k\delta/\cal{D}$,
  where $\mathcal{D}$ is the diffusion constant.  Let $|\mathcal{C}|$ denote the number of active vertices. Then, under the hypotheses of Proposition \ref{firstlimit} below, $$\lim_{h\rightarrow 0} \lim_{\epsilon\rightarrow 0} f^{\epsilon,h}(x,k/h)= 1-\sum_{v\in \mathcal{C}}p_v(x)\frac{\lambda_v(\kappa)}{\lambda(\kappa)},$$
  where  $p_v(x)$ is the probability that a diffusing particle starting from $x$ hits $\mathcal{C}$ for the first time at $v$, conditional on hitting $\mathcal{C}$ at all, and $\lambda(\kappa)$, $\lambda_v(\kappa)$ are polynomials in $\kappa$ of degree at most $|\mathcal{C}|$ and $|\mathcal{C}|-1$, respectively. The coefficients of $\lambda(\kappa),\lambda_v(\kappa)$ depend only on geometric properties of $\Gamma$: lengths of edges, degrees of vertices, location of exit   and active vertices. When $\mathcal{C}=\{v\}$, this limit reduces to 
  $$\lim_{h\rightarrow 0} \lim_{\epsilon\rightarrow 0} f^{\epsilon,h}(x,k/h)= \frac{\lambda \kappa}{1+\lambda \kappa}.$$
  \end{theorem}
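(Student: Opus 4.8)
The plan is to reduce the double limit to a Laplace transform of vertex local times on the skeleton graph $\Gamma$ and then to solve a small linear system. The inner limit $\epsilon\to0$ is supplied by Proposition~\ref{firstlimit}: as the tubes collapse, the killed RBM on $U^\epsilon$ converges to a diffusion $X$ on $\Gamma$ (a Brownian motion with the appropriate current-conservation gluing conditions at the vertices), and $\psi_k,P_{\mathrm{h}}$ converge to the corresponding quantities for $X$; it therefore suffices to work with $X$ and then send $h\to0$. For the outer limit the key point is that the Feynman--Kac weight localizes: the active zone of relative radius $\delta$ around each $v\in\mathcal{C}$ shrinks to the vertex while $k^h=k/h$ blows up, so by the occupation-density (local time) formula the additive functional $\int_0^T k^h q(X_s)\,\mathrm{d}s$ converges to $\kappa\sum_{v\in\mathcal{C}}L^v_T$, where $L^v$ is the suitably normalized vertex local time of $X$ at $v$, $T$ the exit time through $\partial U_a$, and $\kappa=k\delta/\mathcal{D}$ (the factors of $h$ cancel and $\mathcal{D}$ enters through the local-time normalization). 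Passing to the limit in \eqref{eq:psi} gives
\[ \lim_{h\to0}\lim_{\epsilon\to0}\psi_k(x)=\mathbb{E}_x\big[\exp\big(-\kappa\sum_{v\in\mathcal{C}}L^v_T\big)\big]. \]

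Next I would strip off the $x$-dependence. Let $S=\inf\{t:X_t\in\mathcal{C}\}$; on $\{S>T\}$ every $L^v_T$ vanishes, while on $\{S<T\}$ the strong Markov property at $S$ restarts the process from the first active vertex hit, and no active local time is accrued before $S$. Writing $g_v:=\mathbb{E}_v\big[\exp\big(-\kappa\sum_{w\in\mathcal{C}}L^w_T\big)\big]$, recalling $P_{\mathrm{h}}(x)=\mathbb{P}_x[S<T]$, and that $p_v(x)$ is the law of $X_S$ given $\{S<T\}$ (so $\sum_{v}p_v(x)=1$), one obtains
\[ \lim_{h\to0}\lim_{\epsilon\to0}\psi_k(x)=1-P_{\mathrm{h}}(x)\sum_{v\in\mathcal{C}}p_v(x)\,(1-g_v), \]
and hence, using $\alpha_k=1-\psi_k$ and $f=\alpha_k/P_{\mathrm{h}}$,
\[ \lim_{h\to0}\lim_{\epsilon\to0}f^{\epsilon,h}(x,k/h)=1-\sum_{v\in\mathcal{C}}p_v(x)\,g_v. \]
Comparison with the claimed formula reduces everything to showing $g_v=\lambda_v(\kappa)/\lambda(\kappa)$ with the stated degrees.

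To compute the $g_v$ I would use excursion theory at each active vertex. Running $X$ from $v$, in the local-time clock of $L^v$ the excursions away from $v$ form a Poisson point process, and those that reach $(\mathcal{C}\setminus\{v\})\cup\partial U_a$ before returning occur at a finite rate $c_v$. The local time accrued at $v$ before the first such \emph{effective} excursion is thus $\mathrm{Exp}(c_v)$, and during that excursion no active-vertex local time accumulates; setting $\lambda_v:=1/c_v$ and using independence of the value and the type of the first marked point, its contribution is the factor $(1+\lambda_v\kappa)^{-1}$. Conditioning on the destination of the effective excursion --- $\partial U_a$ with probability $q_{v\partial}$, or first $w\in\mathcal{C}\setminus\{v\}$ with probability $q_{vw}$, where $q_{v\partial}+\sum_{w\ne v}q_{vw}=1$ --- the strong Markov property yields the linear system
\[ (1+\lambda_v\kappa)\,g_v-\sum_{w\ne v}q_{vw}\,g_w=q_{v\partial},\qquad v\in\mathcal{C}. \]
Its coefficient matrix $A(\kappa)=D(\kappa)-Q$, with $D(\kappa)=\mathrm{diag}(1+\lambda_v\kappa)$ and $Q=(q_{vw})$ of zero diagonal, has entries affine in $\kappa$, so $\lambda(\kappa):=\det A(\kappa)$ is a polynomial of degree at most $|\mathcal{C}|$; Cramer's rule gives $g_v=\lambda_v(\kappa)/\lambda(\kappa)$, where $\lambda_v(\kappa)$ is the determinant obtained by replacing column $v$ with the constant vector $(q_{v\partial})$, hence of degree at most $|\mathcal{C}|-1$. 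The coefficients are built from $\lambda_v,q_{vw},q_{v\partial}$, which are excursion rates and hitting probabilities of $X$ on $\Gamma$, determined by the edge lengths, vertex degrees, and positions of the exit and active vertices.

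Finally, when $\mathcal{C}=\{v\}$ the system collapses: every effective excursion must exit, so $q_{v\partial}=1$, $A(\kappa)=(1+\lambda\kappa)$ with $\lambda:=\lambda_v$, and $g_v=(1+\lambda\kappa)^{-1}$; since $p_v(x)=1$,
\[ \lim_{h\to0}\lim_{\epsilon\to0}f^{\epsilon,h}(x,k/h)=1-\frac{1}{1+\lambda\kappa}=\frac{\lambda\kappa}{1+\lambda\kappa}, \]
which is the final identity, with $\lambda=\mathbb{E}_v[L^v_T]$ the mean local time at $v$ before exit, depending only on $\Gamma$ and $\mathcal{D}$. I expect the main difficulty to lie not in this algebra but in the analytic justification of the outer limit: proving that the killing functionals converge to $\kappa\sum_v L^v_T$ with the precise constant $\kappa=k\delta/\mathcal{D}$, that the survival probabilities converge accordingly, and that the double limit may be taken in the stated order. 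Establishing this convergence of additive functionals, pinning down the vertex local-time normalization, and verifying that the trace of $X$ on $\mathcal{C}$ is a genuine finite-state Markov chain (so that $c_v$ and the $q_{vw}$ are well defined) is the technical heart of the argument.
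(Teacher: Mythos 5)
Your proposal is correct, and its skeleton is the same as the paper's: the inner limit via Proposition \ref{firstlimit}, the outer limit by convergence of the killing functional $\int_0^{T_a}r^h(X_s)\,ds$ to the local-time functional $\kappa\,\ell_{\mathcal C}(T_a)$ with $\kappa=k\delta/\mathcal D$, and then a strong Markov split at the first hit of $\mathcal C$ yielding $f=1-\sum_v p_v(x)g_v$ with $g_v=\mathbb E_v\left[\exp\{-\kappa\ell_{\mathcal C}(T_a)\}\right]$ (you also implicitly read $f$ as $\alpha_k/P_{\mathrm h}$, consistent with \eqref{typical2} and with the theorem's conclusion, rather than the $\psi_k/P_{\mathrm h}$ written just above the theorem, which is evidently a slip in the paper). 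Where you genuinely diverge is in evaluating the $g_v$. The paper invokes Kac's moment formula from \cite{FP99}: with the Green matrix $G_{ij}=\mathbb E_{c_i}\left[\ell_{c_j}(T_a)\right]$ one gets $\psi_\kappa=(I+\kappa G)^{-1}\mathbbm 1_{\mathcal C}$, hence $g_{c_j}=\det\left(I+\kappa G^{(j)}\right)/\det\left(I+\kappa G\right)$ by Cramer's rule, with the degree bounds read off from the fact that $\kappa$ enters affinely and $G^{(j)}$ has a zero row. You instead run excursion theory at each active vertex and derive the first-passage system $(1+\lambda_v\kappa)g_v-\sum_{w\neq v}q_{vw}g_w=q_{v\partial}$; since your coefficient matrix is likewise affine in $\kappa$ with $\kappa$-free off-diagonal entries, Cramer's rule gives the same degree bounds and the same geometric dependence of the coefficients. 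Your route is in fact the natural multi-vertex extension of the paper's own single-vertex argument (Proposition \ref{simpleprop}, where exponentiality of $\ell_c(T_a)$ under $\mathbb P_c$ is quoted from \cite{MR06}), whereas the paper switches to an off-the-shelf moment formula once $|\mathcal C|>1$. What the paper's route buys: its polynomial coefficients are exactly the expected local times $G_{ij}$, which Section \ref{examples} then computes mechanically by the graph It\^o formula, making the worked examples routine. What your route buys: a self-contained probabilistic derivation with transparent taboo quantities $\lambda_v=1/c_v$, $q_{vw}$, $q_{v\partial}$; the cost is that you must justify the excursion machinery at a graph vertex (finiteness of the rate of effective excursions, independence of the first marked excursion's destination from the elapsed local time), which the paper never needs. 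Finally, the analytic step you flag as the technical heart --- convergence of $\int_0^{T_a}(k/h)\mathbbm 1_{B(v,h\delta)}(X_s)\,ds$ to $\kappa\ell_v(T_a)$ --- is exactly the point the paper disposes of by citing Remark 2.5 of \cite{FS00} (with the $\epsilon$-limit from \cite{AK12}), so your assessment of where the real difficulty lies agrees with the published argument, and nothing in your algebra conflicts with it.
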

  
  It would be interesting to find methods that could tell {\em a priori} how    the coefficients of $\lambda(\kappa)$ and $\lambda_v(\kappa)$
  can be expressed in terms of
  the geometry and topology of the graph, that is, as function of lengths, degrees, etc. A starting point would be to explore in a systematic way
 the properties of these polynomials   for families of graphs.
  Here we are content with showing a few examples only.  
 
    \vspace{0.1in}
\begin{figure}[htbp]
\begin{center}
 \includegraphics[width=4in]{./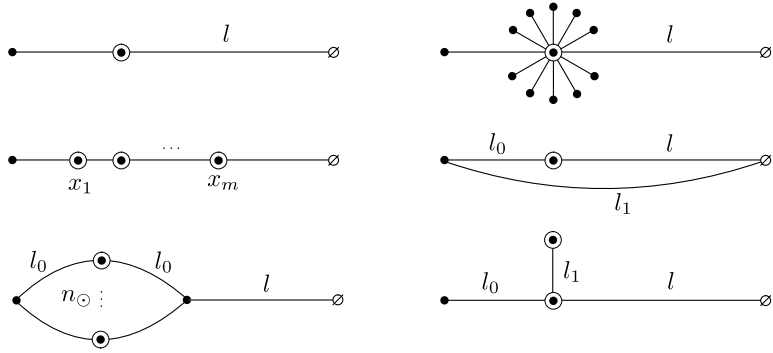}
\caption{\small A few examples of graphs for which the reaction probability is easily computed. The vertex types are: $\cdot=$ inert, $\odot=$ active, $\emptyset=$
exit. The labels $l_i$ indicate edge lengths and the $x_i$ on the middle graph of the left column are the coordinates of the active nodes.  On the third graph of the left column, $n_\odot$ is the number of vertices of type $\odot$.}
\label{diagrams}
\end{center}
\end{figure} 

The reaction probability for the examples of figure \ref{diagrams} are easily obtained by solving the linear system of equations indicated in section \ref{examples}.  In all cases we assume that the point of gas injection is the left-most vertex. The results are as follows.
\begin{enumerate}
\item Top graph on the left column: $$ \alpha(\kappa)= \frac{2l\kappa}{1+2l\kappa}.$$ Naturally, only the length of the edge between the active and exit vertices matters. On the other hand, if the initial edge is completely eliminated so that the starting point for Brownian motion is the active vertex, then the term $2l\kappa$ should be replaced with $l\kappa$.

\item Top graph on the right column: $$\alpha(\kappa)=\frac{\text{deg}(v)l\kappa}{1+\text{deg}(v)l\kappa} $$ where $\text{deg}(v)$ is the degree of the active vertex. As in the first example, only the length of the edge connecting the active vertex to the exit matters, but the number of shorter edges leading to inert vertices influences $\alpha$, regardless of those edges' lengths, so long as their are greater than zero.

\item Middle graph of the left column. Let $l_j=x_j-x_{j-1}$. The value of $\alpha(\kappa)$ can be obtained recursively as follows. Set $g_1=1$ and $g_{j+1}= g_j+ l_{j+1}\left(g_1+ \cdots + g_j\right) \kappa$ for $j=2, \dots, m$. (The vertex $\emptyset$ is at position $x_{m+1}$.) Then $$\alpha(\kappa)=\frac{g_{m+1}-1}{g_{m+1}}.$$ It is interesting to note the following property concerning optimal arrangement of active nodes: When $\kappa$ is small, $$\alpha(\kappa)=\left[(L-x_1)+\cdots+(L-x_m)\right]\kappa+ \text{higher order terms in } \kappa $$ where $L$ is the total  length of the graph. So if $\kappa$ is small, one maximizes reaction probability by clustering all the active nodes near the entrance point. On the other hand, for large values of $\kappa$, $$\alpha(\kappa) =l_2 l_3 \cdots l_{m+1} \kappa^m + \text{lower order tems in } \kappa.$$ One easily obtains that the coefficient of $\kappa^m$ attains a maximum when the active vertices are equally spaced.

\item Middle graph of the right column:
$$\alpha(\kappa)=\alpha(\infty)\frac{\lambda\kappa}{1+\lambda\kappa} $$
where $\alpha(\infty)= l_1/(l_0+l_1)$ and $\lambda=2{l(l_0+l_1)}/({l_0+l_1+l}).$
\item Bottom graph of left column:
$$\alpha(\kappa)=\frac{2(l_0+n_\odot l)\kappa}{1+2(l_0+n_\odot l)\kappa}.$$
Despite having multiple active vertices, this system behaves, in its dependence on $\kappa$, like one with a single active vertex.

\item Bottom graph of right column:
$$ \alpha(\kappa)= \frac{3l\left(1+l_1\kappa\right)\kappa}{1+3l\left(1+l_1\kappa\right)\kappa}.$$
As expected for a system with two active vertices, this is the quotient of second degree polynomials in $\kappa$.
\end{enumerate}
 
\section{Metric graphs and fat-graph domains}\label{fatgraph}

We introduce here notation and terminology concerning {\em metric graphs,} together with an associated class of domains in $\mathbb{R}^d$ we call {\em fat graphs.} Basically, a metric graph is an abstract graph realized as a collection of curvilinear segments; a fat graph is a neighborhood of $\Gamma$ whose boundary satisfies some smoothness conditions. 

More precisely, let $\mathcal{G}=(\mathcal{V}, \mathcal{E})$ be an abstract graph with vertex set $\mathcal{V}$ and edge set $\mathcal{E}$. We assume the edges in $\mathcal{E}$ are oriented, and denote by $s, t: \mathcal{E}\rightarrow \mathcal{V}$ the source and target vertex functions. For each edge $e\in \mathcal{E}$ let $\bar{e}$ be the inverse of $e$, which is the same edge given the opposite orientation. Thus $s(\bar{e})=t(e)$ and $t(\bar{e})=s(e)$. We assume that $\mathcal{E}$ is closed under the inverse operation. We also assume that $|\mathcal{E}|,|\mathcal{V}|<\infty$  where $|\cdot|$ indicates the cardinality. 

Associate to each  $v\in \mathcal{V}$ a point in $\mathbb{R}^d$, still denoted $v$ (so that we now think of $\mathcal{V}$ as a subset of the Euclidean space) and to each $e\in \mathcal{E}$ a smooth curve $\gamma_e:[0,l_e]\rightarrow \mathbb{R}^d$, parametrized by arclength, such that $\gamma_e(0)=s(e)$ and
  $\gamma_e(l_e)=t(e)$. Thus, $l_e$ is the length of the curvilinear segment $E_e:=\gamma_e([0,l_e])$. We assume that $\max_e l_e<\infty$ and write $l_e=|e|=|E_e|$. We also set $\gamma_{\bar{e}}(s):=\gamma_e(|e|-s)$. 
    
The union $\Gamma:=\bigcup_{e\in\mathcal{E}} E_e$ will be called a {\em metric graph}, denoted $\Gamma$. With slight notational abuse we also write $\Gamma^\circ$ for $\bigcup_{e\in\mathcal{E}} E^\circ_v$, where $E_v^\circ=\gamma_e((0,l_e))$. Thus, $\Gamma^\circ=\Gamma\setminus\mathcal{V}$. The word {\em metric} refers to the natural distance defined by minimizing a path between two points. This induced metric makes $\Gamma$ into a separable metric space. 

Each edge has a natural coordinate $y_e=\gamma_e^{-1}:E_e\rightarrow I_e$. By means of this coordinate we can identify functions on $E_e$ with functions on $[0,l_e]$. Similarly, we can identify a function $f:\Gamma\rightarrow\mathbb{R}$ with a collection of functions on the various coordinate intervals $[0,l_e]$ by defining $f_e(s)=f(\gamma_e(s))$ for $s\in[0,l_e]$. Thus we have an obvious way of checking that $f$ is continuous: each $f_e$ must be continuous on $(0,l_e)$ in the ordinary sense, and the extensions to $[0,l_e]$ must agree at the vertices, in the sense that $f_{e_1}(l_{e_1})=f_{e_2}(0)$ whenever $t(e_1)=s(e_2)$. The set of continuous functions on $\Gamma$ is denoted $C(\Gamma)$.

Similarly, we can define the derivative of $f\in C(\Gamma)$ at a point $x=\gamma_e(s)\in E^\circ_v$ by $(f\circ\gamma_e)'(s)$, when this derivative exists in the ordinary sense. At a vertex $v$, we define the one-sided derivatives \[ (D_ef)(v) := \lim_{s\downarrow 0} \frac{f(\gamma_e(s))-f(v)}{s}\] when the limit exists. Thus $D_ef(v)$ is the directional derivative of $f$ at $v$, pointing into $e$. Clearly this definition only makes sense for $e\in\mathcal{E}$ such that $s(e)=v$. Note that, in general, we do not require that the directional derivatives at $v$ all agree. 

We now define the fat graph $U^\epsilon$ as a union of certain $\epsilon$-tubes $U_e^\epsilon$ (one for each $e\in\mathcal{E}$) together with $\epsilon$-junctures $J_v^\epsilon$ (one for each $v\in\mathcal{V}$). Some care is required in formulating the definitions; however, Figure \ref{fatgraphparameters} should convey the right idea. The following conditions ensure that the construction works properly:
\begin{enumerate}
\item If $e, e'$ are any two edges such that $s(e)=s(e')=v$, then $\mathbf{T}_e(v)\neq \mathbf{T}_{e'}(v)$.
\item \label{straight} There exists a $r_0>0$ such that    $\mathbf{T}_e'(s)=0$ for  $s\in [0,r_0]$ and all $e\in \mathcal{E}$.
\item \label{nonintersect} The curves comprised by $\Gamma$ do not intersect each other and have no self-intersection.
\item \label{smooth} Each $\gamma_e$ is $C^3$.
\end{enumerate}

Now fix an $\epsilon>0$. We begin by defining the $\epsilon$-tubes  $U^\epsilon_e$ corresponding to the edges $E_e$ in $\Gamma$. For each $e\in\mathcal{E}$, let there be given a {\em relative radius} $r_e>0$; and then, for each $\epsilon>0$ sufficiently small, let $U^\epsilon_e$ be a tubular neighborhood of $E_e^\circ$ with cross-sectional radius $r_e\epsilon$. According to \cite{F84}, $U^\epsilon_e$ has the {\em nearest point property} with respect to $E_e^\circ$, meaning each $x\in U^\epsilon$ has a unique point $\pi^\epsilon(x)\in E_e^\circ$ nearest to $x$, and the induced mapping $\pi^\epsilon:U^\epsilon_e\rightarrow E_e^\circ$ is a $C^2$ submersion. Furthermore, the distance function \[ x\mapsto d(x,E_e^\circ) := \inf\{y\in E_e^\circ\,:\, \|x - y\|\} \] is $C^3$ near $E_e^\circ$ (because $\gamma_e$ is $C^3$\----see \cite{F84}), so that the unit normal vector field, given by \[ \mathbf{n}^\epsilon(x)=\nabla d(x,E_e)/\|\nabla d(x,E_e)\|,\quad x\in \partial U^\epsilon_e,\] is $C^2$ away from the ends of $U^\epsilon_e$. 

Now, the full $U_e^\epsilon$'s may intersect near the vertices. For this reason, we use assumptions \ref{straight} and \ref{nonintersect} from above to introduce constants $c_0>0$ and $\epsilon_0>0$ which depend only on $\Gamma$ and have the following property: if each $U_e^\epsilon$ is shrunken by a length $c_0\epsilon$ at its ends, then $U_{e_1}^\epsilon\cap U_{e_2}^\epsilon=\varnothing$ whenever $e_1,e_2\in \mathcal{E}$ are distinct and $\epsilon\leqslant \epsilon_0$. In summary, we have a family $\{ U_e^\epsilon \, :\, e\in\mathcal E,\,\epsilon<\epsilon_0\}$ such that: (i) for fixed $\epsilon\leqslant\epsilon_0$ the $U_e^\epsilon$'s are pairwise disjoint; (ii) $\bigcup_{e\in\mathcal{E}} U_e^\epsilon$ has the nearest point property with respect to $\Gamma^\circ$, and the natural projection $\pi^\epsilon$ onto $\Gamma^\circ$ is $C^2$; (iii) $\bigcup_{e\in\mathcal{E}}U_e^\epsilon\downarrow \Gamma^\circ$ as $\epsilon\downarrow 0$. 

Next, we define the juncture regions $J_v^\epsilon$. For each $v\in \mathcal{V}$ let there be given a ``template'' region such that \[ J_v\cup\bigcup_{e:s(e)=v} U_e^{\epsilon_0}\] is simply-connected and has a boundary smooth enough that the unit normal vector field is still $C^2$ away from the ends. In other words, $J_v$ lines up smoothly with its incident $\epsilon_0$-tubes.  It is clear that such a $J_v$ exists. It is also clear that we can shrink or enlarge $J_v$ so that $J_v\cap U_e^{\epsilon_0}$ is a cylindrical wafer of height $(r_0-c_0)\epsilon_0>0$ and radius $r_e\epsilon_0$, for each $e\in\mathcal{E}$. Then $J_v^\epsilon$ is defined in an obvious way by scaling down homothetically: \[ J^\epsilon_v:=\left\{x\in \mathbb{R}^d:   v+\epsilon^{-1}(x-v)\in   J_v\right\}.\] 

\begin{figure}[htbp]
\begin{center}
 \includegraphics[width=3.5in]{./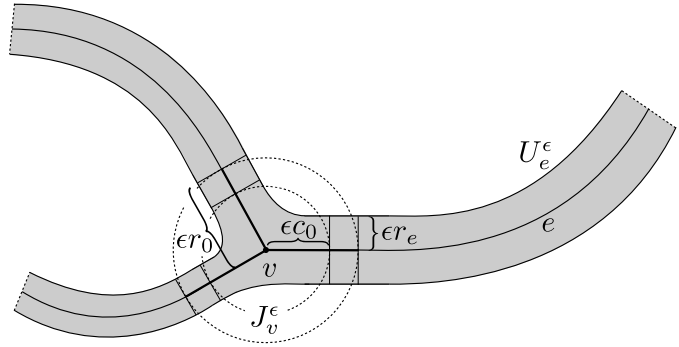}
\caption{\small  The $\epsilon$-juncture $J^\epsilon_v$ at $v$ in part of a  fat graph domain $U^\epsilon$ with skeleton $\Gamma$. The $r_e$ are
the relative radii of the tube cross sections. Note that the part of the edge curve $\gamma_e$  contained  in $J^\epsilon_v$ is straight.}
\label{juncture}
\end{center}
\end{figure}

Finally, with the $\epsilon$-tubes and $\epsilon$-junctures as above, we define the  {\em fat graph} with fatness parameter $\epsilon$ and relative radii $r_e$ as \[ U^\epsilon = \left[\bigcup_{v\in \mathcal V} J_v^\epsilon \right]\cup \left[ \bigcup_{e\in\mathcal{E}} U_e^\epsilon \right]. \]
From the construction, the unit normal vector field on $\partial U^\epsilon$ is $C^2$, and $U^\epsilon\downarrow \Gamma$ as $\epsilon\downarrow 0$. It is also clear that we can extend the projections $\pi^\epsilon$ to be defined inside the $J_v^\epsilon$'s in such a way that $\pi^\epsilon:U^\epsilon\rightarrow\Gamma$ is continuous, and $$ \sup_{v\in \mathcal{V}} \sup_{x\in J^\epsilon_v} \|\pi^\epsilon(x)-v\|=O(\epsilon).$$ Furthermore, we have that \begin{equation} \label{proj} \sup_{x\in  U^\epsilon} \|\pi^\epsilon(x) - x\| = O(\epsilon)\end{equation} uniformly in $x\in U^\epsilon$.

\section{Limit of diffusions from the fat graph to its graph skeleton}\label{fattothin}

In this section we review some background material related to diffusions on fat-graph domains and their limits as the fat graphs shrink down to their underlying metric graphs. The main result quoted here comes from \cite{AK12}, which extends the earlier work \cite{FW93}, with modifications required for our needs. 

Let $\Gamma$ be a metric graph in $\mathbb{R}^d$, and $U^\epsilon$ a family of fat graphs with skeleton $\Gamma$, fatness parameter $\epsilon>0$, and relative radii $r_e$ ($e\in\mathcal{E}$) as defined in Section \ref{fatgraph}. Let $a=\left(a^{ij}(x)\right)$ be a $d\times d$ matrix-valued function on $\mathbb{R}^d$ and $b=\left(b^i(x)\right)$ a vector field in $\mathbb{R}^d$. We assume that $a$ is uniformly positive definite, and that $a$ and $b$ are both bounded and Lipschitz continuous in $\mathbb{R}^d$. Define the differential operator $$L=\frac12 \sum_{i,j} a^{ij}(x) D_iD_j +  \sum_i b^i(x) D_i,$$ where $D_i$ is partial differentiation in $x_i$.  

Writing $a(x)=\sigma(x)\sigma^t(x)$, with $\sigma$ assumed positive-definite and Lipschitz continuous, consider the stochastic differential equation 
\begin{equation}\label{reflecting} X_{t}^\epsilon-X_0^\epsilon= \int_0^{t} \sigma\left(X_s^\epsilon\right)\, dW_s + \int_0^{t} b\left(X^\epsilon_s\right)\, ds
+\int_0^{t}\mathbf{n}^\epsilon\left(X_s^\epsilon\right)\, d\ell_s^\epsilon,\end{equation}
in which $W$ is a $d$-dimensional Brownian motion, $\mathbf{n}$  is the inward pointing unit normal vector field on $\partial U^\epsilon$ and $(\ell_t^\epsilon)$ is a continuous increasing process adapted to the filtration of $X$ and increasing only on the set $\{ t\,:\, X_t\in\partial U^\epsilon\}$. Since $\mathbf{n}^\epsilon$ is $C^2$, the geometric conditions in \cite{S87} are clearly met. Therefore \eqref{reflecting} admits a strong solution, in the sense of \cite{IW}. 
  
Let us describe the behavior of $(X^\epsilon_t)$ as $\epsilon\downarrow 0$. First, define an operator on $C(\Gamma)$ as follows. For each $e\in \mathcal{E}$, let $C_b^2(E_e^\circ)$ denote the space of functions on $E_e^\circ$ with two bounded and continuous derivatives; and then let $L_e$ act on $f\in C_b^2(E_e^\circ)$ as 
$$(L_ef)(x)= \frac12\left\| \sigma^t(x)\mathbf{T}_e(x)\right\|^2  f''_e(s) + \left[\left\langle b(x), \mathbf{T}_e(x) \right\rangle  + 
\left\langle a(x)\mathbf{T}'_e(x), \mathbf{T}_e(x)\right\rangle\right]f'_e(s), $$
where  $x=\gamma_e(s)$, $\mathbf{T}'_e(x)=\left(\mathbf{T}_e\circ\gamma_e\right)'(s)$ and $\langle\cdot,\cdot\rangle$ is the ordinary Euclidean inner product. To ``paste together'' the $L_e$'s, we must specify what happens at the vertices. Thus, we define $\mathcal{D}(L_\Gamma)$ to contain those functions $f\in C(\Gamma)$ for which:
\begin{enumerate}
\item $f_e\in C^2_b(E_e^\circ)$ for each $e\in\mathcal{E}$;
\item for each $v\in \mathcal{V}$,  and $e\in \mathcal{E}$ such that $v=s(e)$, the one-sided limits $\lim_{s\downarrow 0}(L_ef)(\gamma_e(s))$ exist and have
a common value, denoted $(Lf)(v)$;
\item  for each $v\in \mathcal{V}$,
\begin{equation}\label{pev} \sum_{e:s(e)=v} p_v(e) (D_ef)(v)=0\end{equation}
where the numbers $p_v(e)$ are defined by 
 \begin{equation}\label{defpev} p_v(e):=\frac{r_e^{d-1}}{\sum_{e':s(e')=v} r_{e'}^{d-1}}.\end{equation} 
\end{enumerate}
Then, for $f\in \mathcal{D}\left(L_\Gamma\right)$ as above, we define $L_{\Gamma}f$ by
$$(L_\Gamma)(x)=\begin{cases}
(L_ef_e)(y_e(x)) & \text{ if } x\in E_e^\circ\\
\lim_{y\rightarrow x}(L_ef)(y) & \text{ if } x\in \mathcal{V}.
\end{cases}
$$  

According to Theorem 3.1 in \cite{FW93}, there is a diffusion process $(X_t)$ on $\Gamma$ generated by $(L_\Gamma,\mathcal{D}(L_\Gamma))$. On the other hand, Theorem 4.2 in \cite{AK12} says that $(X^{\epsilon}_t)$ converges in distribution to $(X_t)$ as $\epsilon\downarrow 0$ if $X^\epsilon_0$ converges in distribution to a $\Gamma$-valued random variable. Since pathwise uniqueness holds for \eqref{reflecting} under our assumptions, ``in distribution'' can be replaced by ``almost sure'' in the preceding sentence. In particular, this is true if the starting point $X_0^\epsilon$ is a point $x\in\Gamma$ for all $\epsilon>0$. 

\begin{theorem}\label{diffusionlimit} Let $U^\epsilon$ be a fat graph with skeleton   $\Gamma$.
Assume that $X_0^\epsilon=x$ for all $\epsilon>0$ and some $x\in\Gamma$. Then $X^\epsilon$ converges in distribution to $X$ with initial value $X_0=x$. If pathwise uniqueness holds in \eqref{reflecting}, then $X^\epsilon$ converges a.s. to $X$.

\begin{remark} The coefficients $p_v(e)$ appearing above have the following probabilistic significance: if $\tau_\delta$ is the first exit time of $X_t$ from a ball of radius $\delta$ at $v$, then $p_v(e)=\lim_{\delta\downarrow 0}\mathbb{P}_v[X(\tau_\delta)\in E_e]$.  \end{remark}
\end{theorem}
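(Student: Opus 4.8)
The plan is to obtain Theorem~\ref{diffusionlimit} as a direct specialization of the two results already assembled above, so that the genuine content is checking that the fat-graph construction of Section~\ref{fatgraph} meets their hypotheses and explaining the passage from weak to almost sure convergence. First I would dispose of the existence half: the operator $(L_\Gamma,\mathcal{D}(L_\Gamma))$ built above---with edgewise generators $L_e$ and the Wentzell/Kirchhoff-type gluing condition \eqref{pev} with weights \eqref{defpev} at each vertex---is exactly of the type covered by Theorem~3.1 of \cite{FW93}. Invoking that theorem yields a conservative strong Markov diffusion $(X_t)$ on $\Gamma$ with this generator, and since we start from a fixed point the initial value $X_0=x$ is unambiguous.

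Next, for convergence in distribution I would apply Theorem~4.2 of \cite{AK12} verbatim. Its only hypothesis on initial data is that $\mathrm{Law}(X_0^\epsilon)$ converge weakly to a $\Gamma$-valued law. Here $X_0^\epsilon\equiv x\in\Gamma$ for all $\epsilon$, so every initial law is the point mass $\delta_x$ and the hypothesis is trivially met. The cited theorem then gives $X^\epsilon\Rightarrow X$ in the space of continuous paths (uniformly on compact time intervals); the fact that $U^\epsilon\downarrow\Gamma$, together with \eqref{proj}, guarantees the weak limit is carried by $\Gamma$-valued paths, and continuity of evaluation at time $0$ pins down $X_0=x$. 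This settles the convergence-in-distribution claim.

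The almost sure assertion I would extract from pathwise uniqueness by the standard Yamada--Watanabe/Gy\"ongy--Krylov principle. Because the approximating equations \eqref{reflecting} can be driven by a single Brownian motion $W$ and the limiting problem has a pathwise-unique (hence strong) solution, weak convergence upgrades to convergence in probability: every subsequential weak limit of the joint laws of $(X^\epsilon,X^{\epsilon'},W)$ furnishes two solutions of the limit problem driven by the same $W$, which pathwise uniqueness forces to coincide, so the limit concentrates on the diagonal. This is precisely the Gy\"ongy--Krylov criterion for convergence in probability, whence almost sure convergence follows along subsequences, and uniqueness of the limit yields it for the whole family.

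The main obstacle is not this abstract machinery but confirming that \cite{AK12} applies to our domains, and in particular that the scaling built into the junctures reproduces the correct gluing weights. The crux is that the $(d-1)$-dimensional cross-sectional measure of the $e$-tube scales like $(r_e\epsilon)^{d-1}$, so the relative flux---equivalently the relative boundary local time---that a vertex sends into edge $e$ is proportional to $r_e^{d-1}$, matching the weights $p_v(e)=r_e^{d-1}/\sum_{e':s(e')=v}r_{e'}^{d-1}$ of \eqref{defpev}; verifying this limit rigorously is where the $C^2$ regularity of $\mathbf{n}^\epsilon$, the projection estimate \eqref{proj}, and the homothetic definition of $J_v^\epsilon$ all enter. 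A secondary point is pathwise uniqueness for the reflecting SDE \eqref{reflecting}, which is delicate for equations carrying a local-time term but is supplied here by the $C^2$ normal field via \cite{S87}, as already noted.
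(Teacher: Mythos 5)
Your proposal follows essentially the same route as the paper, which likewise obtains the limiting diffusion from Theorem~3.1 of \cite{FW93}, deduces convergence in distribution from Theorem~4.2 of \cite{AK12} (the point-mass initial condition trivially satisfying its hypothesis), and then invokes pathwise uniqueness for \eqref{reflecting} to replace ``in distribution'' by ``almost sure.'' In fact you supply more detail than the paper does on that last step---the paper simply asserts the upgrade, whereas you make it precise via the Gy\"ongy--Krylov criterion (noting only that your final clause, passing from convergence in probability to almost sure convergence of the whole family rather than along subsequences, is stated no more carefully than the paper's own assertion).
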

\section{Diffusions with killing} Let $U^\epsilon$ be a fat-graph domain with skeleton $\Gamma$. The terminology and assumptions of Sections \ref{fatgraph} and \ref{fattothin} remain in force. 

We introduce a function $r(x)$ which represents the {\em rate of killing} of a diffusion in $U^\epsilon$ or in $\Gamma$.  This function is assumed to depend on a parameter $h>0$ whose role will become clear later; roughly,  $r^h$ will ``collapse'' to a vertex condition when $h$ goes to $0$. But for the moment we suppress $h$ in order to simplify the notation.

Thus, let $r:\mathbb{R}^d\rightarrow [0,\infty)$ be a non-negative measurable function, representing the chemical reaction rate. We are interested in processes obtained from $X^\epsilon$ on $U^\epsilon$ (resp. $X$ on $\Gamma$) by killing using the rate function $r$. Loosely, killing with rate $r$ means forming new processes $Y^\epsilon$ (resp. $Y$) which behave like $X^\epsilon$ (resp. $X$) until $\int_0^t r(X^\epsilon_s)\,ds$ (resp. $\int_0^t r(X_s)\,ds$) exceed an independent exponential random time; after this time, they are sent to a cemetery state $\triangle$. In this situation, the Feynman-Kac formula says that $Y^\epsilon$ and $Y$ have extended generators
$$L^rf= Lf-r(x)f \ \ \ \text{ and }\ \ \  L^r_\Gamma f=L_\Gamma f- r(x) f,$$
respectively, where $L$ and $L_\Gamma$ are as defined in section \ref{fattothin} and the domains are the same. Furthermore, the semigroup of $Y^\epsilon$ acts on functions as \[ P_t^\epsilon f(x) = \mathbb{E}_x\left[\exp\left\{ -\int_0^t r(X^\epsilon_s)\,ds\right\} f(X_t^\epsilon)\right] \] with a similar statement holding for $Y$. In particular, $P_t^\epsilon1(x)=\mathbb{E}_x\left[\exp\left\{-\int_0^t r(X_s^\epsilon)\,ds\right\}\right]$ is the probability that $Y^\epsilon$ is still in $U^\epsilon$, i.e. still ``alive'' at time $t$. With this in mind, we define the {\em survival function} of the process  at  $x\in U^\epsilon$ as
 \begin{equation}\label{fk1}\psi^{\epsilon}(x):=
 \mathbb{E}_x \left[\exp\left\{-\int_0^{T_a^\epsilon} r\left(X^\epsilon_s\right)\, ds\right\} \right].  \end{equation} Here, $T_a^\epsilon$ is a random time which can be thought of as the time that $X^\epsilon$ is absorbed at the reactor, as we now explain. 
 
 \begin{figure}[htbp]
\begin{center}
 \includegraphics[width=3in]{./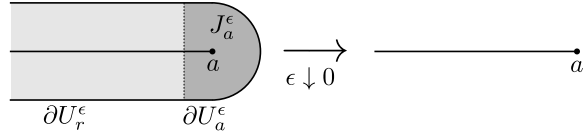}
\caption{\small  A portion of the absorbing part of the boundary of $U^\epsilon$ and its limit in $\Gamma$. The process $X^\epsilon$ in $U^\epsilon$ is killed when it reaches the dotted line separating
   the regions indicated in light and dark shading. The limiting process $X$ on $\Gamma$ is killed when it reaches $a$. }
\label{end}
\end{center}
\end{figure} 
 
Let there be given a certain subset $\mathcal{V}_a=\{a_j\}\subset\mathcal{V}$ of degree 1 vertices in $\Gamma$ to be regarded as the reactor exit. Write (with abuse of notation) $\partial U_a^\epsilon$ for the closure of the union of the corresponding juncture regions in $U^\epsilon$, and $\partial U_r^\epsilon$ for the rest of the boundary of $U^\epsilon$. We call $\partial U_a^\epsilon$ the {\em absorbing} part of the boundary and $\partial U_r^\epsilon$ the {\em reflecting} part. Then $\partial U_a^\epsilon\downarrow \mathcal{V}_a$ as $\epsilon\downarrow 0$. (See Figure \ref{end}.) Let $T_a^\epsilon$ be the first hitting time of $X^\epsilon$ to $\partial U_a^\epsilon$, and $T_a$ the first hitting time of $X$ to $\mathcal{V}_a$. For now, we {\em assume} that $T_a^\epsilon$ and $T_a$ are finite a.s., and that $T^\epsilon \rightarrow T$ a.s. Under these circumstances, we have the following:

 \begin{proposition} \label{firstlimit}
Suppose that $T_a^\epsilon$ and $T$ are a.s. finite and that $T_a^\epsilon\rightarrow T_a$ a.s. If $x\in \Gamma$ then
$$\psi^{\epsilon,h}(x)\rightarrow \mathbb{E}_x\left[ \exp \left\{ -\int_0^{T_a} r^h\left(X_s\right)\, ds\right\}\right] $$
as $\epsilon\downarrow 0$, where $T_a$ is the hitting time to $\mathcal{V}_a$ of the limiting  process $X$.
 \end{proposition}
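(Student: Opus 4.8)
The plan is to exhibit the right-hand side as an almost-sure limit of the Feynman--Kac integrands appearing in \eqref{fk1} and then to pass to the limit under the expectation by dominated convergence. The only substantial work is to establish stability of the exponent $\int_0^{T_a^\epsilon} r^h(X^\epsilon_s)\,ds$ as $\epsilon\downarrow 0$; once this is in hand the rest is automatic because the integrands are uniformly bounded by $1$.

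First I would fix the starting point $x\in\Gamma$ and invoke Theorem \ref{diffusionlimit}. Since pathwise uniqueness holds for \eqref{reflecting}, all the processes $X^\epsilon$ and the limit $X$ can be realized on one probability space so that $X^\epsilon\to X$ almost surely, the convergence being uniform on compact time intervals in the Euclidean metric of $\mathbb{R}^d$; this is the correct topology because $U^\epsilon\downarrow\Gamma$ and $\|\pi^\epsilon(y)-y\|=O(\epsilon)$ uniformly. (Alternatively one may apply the Skorokhod representation theorem to the distributional convergence.) I then restrict attention to the almost-sure event on which simultaneously $X^\epsilon\to X$ uniformly on compacts and $T_a^\epsilon\to T_a<\infty$, the latter being one of the standing hypotheses of the proposition.

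On this event I would show the exponents converge. Writing
\[
\int_0^{T_a^\epsilon} r^h(X^\epsilon_s)\,ds-\int_0^{T_a} r^h(X_s)\,ds
=\int_0^{T_a^\epsilon}\!\bigl[r^h(X^\epsilon_s)-r^h(X_s)\bigr]\,ds\;+\;\int_{T_a}^{T_a^\epsilon} r^h(X_s)\,ds,
\]
the second term is bounded by $\|r^h\|_\infty\,|T_a^\epsilon-T_a|\to 0$, and the first is bounded in absolute value by $\int_0^{S}\bigl|r^h(X^\epsilon_s)-r^h(X_s)\bigr|\,ds$, where $S:=T_a+1$ and $\epsilon$ is small enough that $T_a^\epsilon\le S$ (possible since $T_a^\epsilon\to T_a<\infty$). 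Uniform convergence of $X^\epsilon$ to $X$ on $[0,S]$ together with continuity of $r^h$ at $X_s$ gives $r^h(X^\epsilon_s)\to r^h(X_s)$ for almost every $s$, and bounded convergence on $[0,S]$ sends the first term to $0$. Hence the integrands
\[
F^\epsilon:=\exp\Bigl\{-\int_0^{T_a^\epsilon} r^h(X^\epsilon_s)\,ds\Bigr\}
\]
converge almost surely to $F:=\exp\{-\int_0^{T_a} r^h(X_s)\,ds\}$. Since $r^h\ge 0$ we have $0\le F^\epsilon\le 1$, so the constant $1$ dominates and the dominated convergence theorem yields $\psi^{\epsilon,h}(x)=\mathbb{E}_x[F^\epsilon]\to\mathbb{E}_x[F]$, which is exactly the claim.

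The step I expect to be the main obstacle is the pointwise convergence $r^h(X^\epsilon_s)\to r^h(X_s)$ when $r^h$ is merely measurable, for instance a sharp indicator of the active zone. Then one must rule out the limiting process accumulating positive occupation time on the discontinuity set $\partial\{r^h>0\}$. This is controlled using regularity of the active-zone boundary: $\partial\{r^h>0\}$ meets the one-dimensional skeleton $\Gamma$ in a finite set of points, and the diffusion $X$ generated by $(L_\Gamma,\mathcal{D}(L_\Gamma))$ spends zero Lebesgue time at any single point, so $s\mapsto r^h(X_s)$ is continuous for almost every $s$ almost surely, which is all the bounded-convergence step requires. If $r^h$ is instead taken to be a continuous (smooth) approximation of the indicator, as the model permits, this difficulty disappears entirely and the argument above is routine.
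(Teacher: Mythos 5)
Your proof is correct and takes precisely the route the paper intends: the paper states Proposition \ref{firstlimit} without an explicit proof, treating it as an immediate consequence of the almost-sure convergence $X^\epsilon \to X$ (Theorem \ref{diffusionlimit} together with pathwise uniqueness for \eqref{reflecting}), the hypothesized almost-sure convergence $T_a^\epsilon \to T_a$, and bounded convergence applied to the Feynman--Kac functionals, which are bounded by $1$. Your write-up simply supplies the details the paper leaves tacit, including the one genuinely delicate point---that an indicator-type rate $r^h$ is discontinuous on $\partial\{r^h>0\}$---which you handle correctly by observing that this set meets $\Gamma$ in finitely many points at which the limiting diffusion accumulates zero Lebesgue occupation time.
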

    
With some additional smoothness on $r^h$, this conclusion can also be recast as a boundary value problem, which will be useful for computations. The following conclusion is standard. See, for example, \cite{B98}.
    
    \begin{corollary}
Suppose that $u$ is a bounded and continuous function on $\overline{U^\epsilon}$ that solves the equation $Lu-r^h u=0$ in $U^\epsilon$, together with the boundary conditions 
$$
\frac{\partial u}{\partial n}= 0  \text{ on } \partial  U_r^\epsilon \text{ and }
u=1  \text{ on } \partial U_a^\epsilon.
$$
Then $u=\psi^{\epsilon, h}$, with $\psi^{\epsilon,h}$ defined as in \eqref{fk1}, and $$\psi^{\epsilon, h}(x)\rightarrow \psi^h(x)$$
for every $x\in \Gamma$, where $\psi^h$ is the solution to the corresponding ordinary differential equation  on the graph $\Gamma$, namely: $L_\Gamma u-r^hu=0$
with boundary condition $u=1$ on $\mathcal{V}_a$. 
    \end{corollary}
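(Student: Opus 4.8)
The plan is to prove the two assertions separately: first the Feynman--Kac identity $u=\psi^{\epsilon,h}$ on the fat graph, and then the convergence $\psi^{\epsilon,h}\to\psi^h$, identifying the limit via Proposition \ref{firstlimit} together with a second Feynman--Kac argument on $\Gamma$.

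For the identity $u=\psi^{\epsilon,h}$, I would apply It\^o's formula to the product
\[
M_t:=\exp\left\{-\int_0^t r^h(X_s^\epsilon)\,ds\right\}u(X_t^\epsilon).
\]
Writing $Z_t=\exp\{-\int_0^t r^h(X_s^\epsilon)\,ds\}$, which has finite variation with $dZ_t=-r^h(X_t^\epsilon)Z_t\,dt$, the quadratic covariation of $Z$ and $u(X^\epsilon)$ vanishes, so $dM_t=Z_t\,du(X_t^\epsilon)-r^h(X_t^\epsilon)Z_tu(X_t^\epsilon)\,dt$. Expanding $du(X_t^\epsilon)$ along the reflecting SDE \eqref{reflecting} produces three contributions: a drift $(Lu)\,dt$, a martingale term $(\sigma^t\nabla u)\cdot dW_t$, and a boundary term $(\nabla u\cdot\mathbf{n}^\epsilon)\,d\ell_t^\epsilon$. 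The boundary term vanishes because $\ell^\epsilon$ increases only on $\partial U^\epsilon$, which before $T_a^\epsilon$ means on the reflecting part $\partial U_r^\epsilon$, where the Neumann condition $\partial u/\partial n=0$ holds. Using $Lu-r^hu=0$ in $U^\epsilon$ then cancels the drift, leaving $dM_t=Z_t(\sigma^t\nabla u)\cdot dW_t$, so $M$ is a local martingale; since $u$ is bounded and $0\le Z_t\le1$ it is bounded, hence a true martingale. Optional stopping at the a.s.\ finite time $T_a^\epsilon$ gives $u(x)=\mathbb{E}_x[Z_{T_a^\epsilon}u(X^\epsilon_{T_a^\epsilon})]$, and since $u\equiv1$ on $\partial U_a^\epsilon$ this is exactly $\psi^{\epsilon,h}(x)$, by the definition \eqref{fk1}.

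For the convergence, Proposition \ref{firstlimit} already gives, for $x\in\Gamma$,
\[
\psi^{\epsilon,h}(x)\longrightarrow\mathbb{E}_x\left[\exp\left\{-\int_0^{T_a}r^h(X_s)\,ds\right\}\right]
\]
as $\epsilon\downarrow0$, so it remains only to identify the right-hand side with the ODE solution $\psi^h$. For this I would repeat the Feynman--Kac computation, now for the limiting diffusion $X$ on $\Gamma$ and its killed generator $L_\Gamma^{r^h}=L_\Gamma-r^h$. Taking $\psi^h\in\mathcal{D}(L_\Gamma)$ to be the solution of $L_\Gamma\psi^h-r^h\psi^h=0$ with $\psi^h=1$ on $\mathcal{V}_a$, the Dynkin martingale for the Feller process generated by $(L_\Gamma^{r^h},\mathcal{D}(L_\Gamma))$ makes $\exp\{-\int_0^t r^h(X_s)\,ds\}\psi^h(X_t)$ a martingale; optional stopping at $T_a$ together with the terminal value $\psi^h\equiv1$ on $\mathcal{V}_a$ yields $\psi^h(x)=\mathbb{E}_x[\exp\{-\int_0^{T_a}r^h(X_s)\,ds\}]$, matching the limit above.

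The main obstacle is regularity rather than probability. To run It\^o's formula one needs $u\in C^2(U^\epsilon)$ with $C^1$ behavior up to $\partial U_r^\epsilon$, whereas the hypothesis only posits a bounded continuous solution; this gap is bridged by interior and boundary elliptic regularity, which is exactly what requires the ``additional smoothness on $r^h$'' flagged in the text, together with the $C^2$ boundary of $U^\epsilon$. On the graph side the analogous subtlety is that $\psi^h$ must genuinely lie in $\mathcal{D}(L_\Gamma)$: besides solving the edgewise ODE it must be continuous at the vertices and satisfy the flux condition \eqref{pev}. This is precisely the domain condition that validates the Dynkin martingale for the graph diffusion, so the vertex gluing, and not the stochastic calculus, is where the metric-graph structure does its essential work.
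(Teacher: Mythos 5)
Your proposal is correct and follows exactly the route the paper intends: the paper gives no written proof, citing \cite{B98} for the standard Feynman--Kac martingale argument (It\^o's formula applied to $\exp\{-\int_0^t r^h(X^\epsilon_s)\,ds\}\,u(X^\epsilon_t)$, with the Neumann condition killing the local-time term and optional stopping at $T_a^\epsilon$), and obtains the convergence directly from Proposition \ref{firstlimit} followed by the analogous identification of the limit with the graph ODE solution. Your flagging of the regularity gap (bounded continuous $u$ versus the $C^2$ needed for It\^o, and $\psi^h$ genuinely lying in $\mathcal{D}(L_\Gamma)$ with the vertex flux conditions) accurately reflects the paper's own hedge about ``additional smoothness on $r^h$.''
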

  
\section{Collapsing  active zones towards vertices of $\Gamma$}

In the previous sections we described a conservative diffusion $X=(X_t)$ on a metric graph $\Gamma$ generated by an operator $L_\Gamma$ acting on a domain characterized by the vertex condition \eqref{pev}. This $X$ was obtained by collapsing the conservative diffusion $X^\epsilon$ on the fat graph $U^\epsilon$ down to $\Gamma$ as $\epsilon\downarrow 0$. We also described a nonconservative process $Y^h=(Y^h_t)$ obtained by killing $X$ using the rate function $r^h$. The killed process $Y^h$ has generator $L_\Gamma-r^h$ acting on the same domain. The function $r^h$ represents the rate of chemical activity on the active zones. 

Now, we wish to collapse the active zones, i.e. the regions in $\Gamma$ where $r^h$ is positive, to a collection of vertices, as $h\downarrow 0$. We have in mind that all the chemical activity is concentrated on the active vertices as $h\downarrow0$, while the killing rate is increased towards $\infty$ in such a way that the overall effect is the same in the limit. 

For concreteness, we assume that the operator $L_\Gamma$ acts as $\mathcal{D}\frac{d^2}{d y_e^2}$ on each edge $E_e$, where $\mathcal{D}$ is the diffusion coefficient; also, that the active regions are balls $B(v,h\delta)$ centered at  some of the vertices with radius $h\delta$, where $\delta>0$ is a fixed number with the units of distance, and $h>0$ is a dimensionless parameter. Thus, each $B(v,h\delta)$ is a star-shaped neighborhood of $v$ consisting of $v$ and a union of segments of edges of length $h\delta$ for each edge issuing from $v$. The killing rate function $r^h(x)$ will  then be  assumed to have the form
\begin{equation}\label{rateh}r^h(x)=\sum_{v\in \mathcal{V}}\frac{k_v}h \mathbbm{1}_{B(v,h\delta)}(x)\end{equation}
where $k_v$ is either $k$ or $0$ depending on whether that vertex is to be considered active or not. 
It is known (as explained in Remark 2.5 of \cite{FS00}) that,  as $h\downarrow 0$, 
$$\mathbb{E}_x\left[ \exp \left\{ -\int_0^{T_a} r^h\left(X_s\right)\, ds\right\}\right] \rightarrow \mathbb{E}_x\left[ \exp\left\{-\sum_{v\in \mathcal{V}} \kappa_v \ell_v(T_a)\right\}\right],$$
where $\kappa_v= k_v\delta/\mathcal{D}$ and $\ell_v(T_a)$ is the semimartingale local time (as defined in \cite{FS00}) at $v$ evaluated at the hitting time $T_a$. 

Write $\mathcal{C}$ for the set of active vertices in $\mathcal{V}$, and let $\ell_\mathcal{C}(t)$ denote the local time accumulated at $\mathcal{C}$ up to time $t$. In other words, $\ell_\mathcal{C}(t)$ is the sum of the $\ell_v(t)$ for which $k_v$ is not zero. Then the limit in the above expression becomes $\mathbb{E}_x\left[\exp\left\{ -\kappa \ell_\mathcal{C}(T_a) \right\}\right]$ where $\kappa=k\delta/\mathcal{D}$. 
Also, the killed processes $Y^h$ converge to a process $Y$ which is obtained from $X$ as follows: run $X$ until $\kappa\ell_\mathcal{C}(t)$ exceeds an independent rate $1$ exponential; after this time, send $X$ to the cemetery state $\triangle$. In other words, kill $X$ using the local time $\ell_\mathcal{C}(t)$ rather than the integral $\int_0^t r^h(X_s)\,ds$. Then the new process $Y$ is has a generator which is defined in the same way as $L_\Gamma$ for functions on $\Gamma^\circ$. However, the domain of $Y$'s generator is characterized by a different vertex condition, as explained in the following: 

\begin{proposition}

Let $r^{\epsilon,h}(x)$ be positive bounded functions on $U^\epsilon$ with the property that $$\lim_{\epsilon\downarrow 0} r^{\epsilon,h}(x)=r^h(x)$$ for all $x\in\Gamma$,  where $r^h(x)$ is defined at \eqref{rateh}. Let $\psi^{\epsilon,h}(x)$ and $\psi(x)$ be the survival functions associated with $Y^{\epsilon,h}$ and $Y$, respectively; that is,  \begin{align*}\psi^{\epsilon,h}(x)&=\mathbb{E}_x\left[ \exp\left\{ -\kappa\int_0^{T_a^\epsilon} r^{h,\epsilon}(X^\epsilon_s)\,ds\right\}\right],\\
\psi(x) &= \mathbb{E}_x\left[ \exp\left\{ -\kappa\ell_\mathcal{C}(T_a)\right\}\right]. \end{align*}If the conditions of Proposition \ref{firstlimit} are met, then
$$\psi(x)=\lim_{h\rightarrow 0}\lim_{\epsilon\rightarrow0} \psi^{\epsilon, h}(x)$$
where $\kappa=k\delta/\mathcal{D}$, as defined above. Furthermore, the generator of the limiting process has a domain characterized by the vertex condition
\begin{equation}\label{vcond}\sum_{e:s(e)=v} p_v(e) (D_eu)(v)=\kappa_v u(v) \end{equation} where $\kappa_v=\kappa$ if $v\in\mathcal{C}$ and $\kappa_v=0$ if $v\in\mathcal{V}\setminus\mathcal{C}$.\end{proposition}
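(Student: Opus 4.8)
The plan is to handle the two assertions in turn: the iterated-limit identity for the survival functions, and then the characterization of the generator of the limiting process $Y$. For the convergence I would simply chain together the two limiting results already in hand. Fix $h>0$. Since $r^{\epsilon,h}\to r^h$ pointwise on $\Gamma$ and the hypotheses of Proposition \ref{firstlimit} hold, that proposition gives
$$\lim_{\epsilon\to 0}\psi^{\epsilon,h}(x) = \mathbb{E}_x\left[\exp\left\{-\int_0^{T_a} r^h(X_s)\,ds\right\}\right] =: \psi^h(x)$$
for each $x\in\Gamma$. Letting $h\to 0$ and invoking the local-time convergence recalled above (Remark 2.5 of \cite{FS00}), the right-hand side converges to $\mathbb{E}_x[\exp\{-\kappa\,\ell_\mathcal{C}(T_a)\}]=\psi(x)$, with $\kappa=k\delta/\mathcal{D}$. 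This establishes $\psi(x)=\lim_{h\to0}\lim_{\epsilon\to0}\psi^{\epsilon,h}(x)$, with no new estimate beyond the two cited limits.

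For the vertex condition, the key tool is the generalized It\^o--Tanaka formula for the conservative graph diffusion $X$. For $f$ that is $C^2_b$ on each open edge, continuous on $\Gamma$, and such that the edgewise limits of $L_e f$ at each vertex exist and agree (that is, $f$ satisfies conditions 1--2 in the definition of $\mathcal{D}(L_\Gamma)$ but not necessarily the Kirchhoff condition \eqref{pev}), one has the decomposition
$$f(X_t)-f(X_0)-\int_0^t (L_\Gamma f)(X_s)\,ds = M^f_t + \sum_{v\in\mathcal{V}} c_v\,\ell_v(t),\qquad c_v:=\sum_{e:s(e)=v} p_v(e)\,(D_e f)(v),$$
where $M^f$ is a martingale and $\ell_v$ is the semimartingale local time at $v$ of \cite{FS00}. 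The coefficient of $\ell_v$ is exactly the Kirchhoff defect at $v$; in particular $f\in\mathcal{D}(L_\Gamma)$ precisely kills these local-time terms, recovering that $X$ is conservative with generator $L_\Gamma$.

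Now recall that $Y$ is obtained from $X$ by killing with the multiplicative functional $M_t:=\exp\{-\kappa\,\ell_\mathcal{C}(t)\}$, where $\ell_\mathcal{C}=\sum_{v\in\mathcal{C}}\ell_v$. Since $\ell_\mathcal{C}$ is continuous and increasing, $M$ is a continuous finite-variation process with $dM_t=-\kappa M_t\,d\ell_\mathcal{C}(t)$ and $[M,f(X)]\equiv 0$. Applying the product rule for semimartingales and substituting the decomposition above gives
$$M_t f(X_t)-f(X_0)-\int_0^t M_s (L_\Gamma f)(X_s)\,ds = \int_0^t M_s\,dM^f_s + \sum_{v\in\mathcal{V}}\int_0^t M_s\big[c_v-\kappa_v f(X_s)\big]\,d\ell_v(s),$$
with $\kappa_v=\kappa$ for $v\in\mathcal{C}$ and $\kappa_v=0$ otherwise. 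Because $\ell_v$ increases only on $\{X_s=v\}$ and $f$ is continuous, inside the $v$-th integral we may replace $f(X_s)$ by $f(v)$. The first integral on the right is a martingale, so the left-hand side is a martingale -- that is, $f$ lies in the domain of the generator $\mathcal{A}$ of $Y$ with $\mathcal{A}f=L_\Gamma f$ -- if and only if every local-time coefficient vanishes, i.e.
$$\sum_{e:s(e)=v} p_v(e)(D_e f)(v) = \kappa_v f(v)\qquad\text{for every } v\in\mathcal{V},$$
which is precisely \eqref{vcond}.

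The main obstacle is the generalized It\^o formula together with the completeness of the domain characterization. Establishing the decomposition with the precise coefficient $c_v$ requires the semimartingale description of $X$ near the vertices and the correct normalization of $\ell_v$ from \cite{FS00}; this is where the analytic content sits. The computation above shows that \eqref{vcond} is sufficient for membership in the domain and that $\mathcal{A}$ acts as $L_\Gamma$ there. To conclude that \eqref{vcond} \emph{characterizes} the domain, I would argue that these functions form a core and that $L_\Gamma$ restricted to the functions satisfying \eqref{vcond} generates a unique strong Markov (Feller) process, which must then coincide with the killed process $Y$ by uniqueness of the associated martingale problem.
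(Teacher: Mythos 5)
Your treatment of the iterated limit is the same as the paper's: the paper simply chains Proposition \ref{firstlimit} with the local-time convergence from Remark 2.5 of \cite{FS00}, exactly as you do, and dismisses this part with ``the limit statement has already been discussed.'' Where you genuinely diverge is the vertex condition. The paper works directly with the strong generator $(\hat{L},D(\hat{L}))$ of the killed process $Y$ and applies Dynkin's formula at an active vertex $v$: the expected exit time of $Y$ from $B(v,\delta)$ is of order $\delta^2/(2\mathcal{D})$, while the numerator $\hat{\mathbb{E}}_v[F(Y(\hat{\tau}_\delta))]-F(v)$ is computed to be $\frac{1}{1+\kappa\delta}\sum_e p_v(e)\left[(D_eF)(v)-\kappa F(v)\right]\delta+o(\delta)$, using the exponential law of $\ell_v(\tau_\delta)$ (mean $\delta$) together with the exit probabilities $p_v(e)$; since the denominator is $O(\delta^2)$, existence and finiteness of the Dynkin limit force the first-order coefficient to vanish, which is \eqref{vcond}. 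This gives necessity of \eqref{vcond} for membership in $D(\hat{L})$, with remaining details deferred to \cite{KPS12}. Your route instead runs through the It\^o--Tanaka formula of \cite{FS00} (which the paper itself only invokes later, in Section \ref{examples}) combined with the product rule for the multiplicative functional $e^{-\kappa\ell_\mathcal{C}(t)}$; this identifies the Kirchhoff defect at each vertex as the coefficient of local time and yields, in a single computation, that the compensated killed process is a (local) martingale precisely when \eqref{vcond} holds at every vertex. What each buys: the paper's argument is local, elementary, and pins down a necessary condition on the strong generator's domain but nothing more; yours gives both directions at once within the class of edgewise-$C^2$ functions (necessity following because a continuous finite-variation local martingale is constant and the integrals $\int_0^t M_s\,d\ell_v(s)$ increase on disjoint time sets), and it meshes naturally with the paper's later ``Method 2,'' which rests on exactly the martingale property you establish. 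Both arguments are sketches at the same level of rigor: you defer the full domain characterization to a core/martingale-problem uniqueness argument, while the paper defers it to \cite{KPS12}. Two small points to tighten in yours: $\int_0^t M_s\,dM^f_s$ is a priori only a local martingale, so the equivalence should be phrased for local martingales plus a localization step; and the necessity direction should note that every vertex is visited with positive probability from some starting point, so that no local-time term is degenerately zero.
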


\begin{proof}[Sketch of proof] The limit statement has already been discussed. As for the vertex condition, we will show that a function in the domain of $Y$'s generator must satisfy \eqref{vcond}.  Thus, let $(\hat{L},D(\hat{L}))$ be the generator of $Y$. Fix an active vertex $v\in\mathcal{C}$. Let $U_\delta=B(v,\delta)$ and $\hat{\tau}_\delta$ the first exit time of $Y_t$ from $U_\delta$. Also, write $\hat{\mathbb{E}}_v$ for the law of $Y_t$ started at $v$. If $F\in D(\hat{L})$ then Dynkin's formula (see \cite{RW1} Ch. 3) reads: \[ (\hat{L}F)(v)= \lim_{\delta\downarrow 0} \frac{\hat{\mathbb{E}}_v[F(Y(\hat{\tau}_\delta))] - F(v)}{\hat{\mathbb{E}}_v[\hat{\tau}_\delta]}\]
Let $\mathbf{e}$ be an independent rate 1 exponential (we can always enlarge the probability space to accommodate such a variable) and then $\hat{\zeta}=\inf\{ t>0 : \kappa\ell_\mathcal{C}(t) > \mathbf{e}\}$. Thus, $\hat{\zeta}$ is the time that $Y_t$ jumps from $\Gamma$ to the cemetery state $\triangle$. Evidently $\hat{\tau}_v=\tau_\delta\wedge\hat{\zeta}$ where $\tau_\delta$ is the first exit time of $X_t$ from $U_\delta$. In the denominator, then, we have \[ \hat{\mathbb{E}}_v[\hat{\tau}_v] \sim \mathbb{E}_v[\tau_\delta] = \frac{\delta^2}{2\mathcal{D}}\] as $\delta\downarrow 0$. On the other hand, $Y_t$ leaves $U_\delta$ either through a point $(\delta,e)$ (using local coordinates) if $\tau_\delta<\hat{\zeta}$ or by a jump to $\triangle$ if $\hat{\zeta}\leqslant\tau_\delta$. Since $F(\triangle)=0$ in the latter case, we have in the numerator, in view of the Remark under Theorem \ref{diffusionlimit},  \begin{align*} \hat{\mathbb{E}}_v[F(Y(\hat{\tau}_\delta))]-F(v) &= \sum_e F_e(\delta)\mathbb{P}_v[\tau_\delta<\hat{\zeta}]-F(v) \\  &= \sum_e F_e(\delta)p_v(e)\mathbb{P}_v[\tau_\delta<\hat{\zeta}]-F(v)+o(\delta)\end{align*} Now $(\tau_\delta<\hat{\zeta})=(\kappa\ell_\mathcal{C}(\tau_\delta)\leqslant\mathbf{e})=(\kappa\ell_c(\tau_\delta)\leqslant \mathbf{e})$ $\mathbb{P}_v$-a.s., because starting from $v$, only the local time at $v$ can contribute to $\ell_\mathcal{C}(t)$ before $\tau_\delta$. Also, $\ell_v(\tau_\delta)$ has an exponential distribution under $\mathbb{P}_v$, as explained below; we can compute its $\mathbb{E}_v$-mean as $\mathbb{E}_v[\tau_\delta]=\delta$ using the methods in Section \ref{examples}. Therefore $\mathbb{P}_v[\kappa\ell_v(\tau_\delta)\leqslant \mathbf{e}]$ is the probability that a mean $\kappa\delta$ exponential is less than an independent mean $1$ exponential; this is easily found to be $\frac{1}{1+\kappa\delta}$. Therefore the numerator equals \begin{align*}
\frac{1}{1+\kappa\delta}\sum_e F_e(\delta)p_v(e) - F(v)+o(\delta) &= \frac {1}{1+\kappa\delta}\sum_e p_v(e)\left[F_e(\delta-F(v)-\kappa\delta F(v)\right] \\
&= \frac{1}{1+\kappa\delta}\sum_e p_v(e)\left[D_eF(v)-\kappa F(v)\right]\delta+o(\delta)\end{align*} Existence and finiteness of the limit in Dynkin's formula therefore requires that $\sum_e (D_e F)(v)-\kappa F(v)=0$ if $F\in D(\hat{L})$. Similar reasoning away from the vertex shows that $\hat{L}$ has to act as $L$ there. For the rest of the details see \cite{KPS12}. 
\end{proof}

\begin{remark}  The upshot of these considerations is that, in the limit as $h\downarrow 0$ and then $\epsilon\downarrow0$, we obtain an expression $\mathbb{E}_x[\exp\{-\kappa\ell_\mathcal{C}(T_a)\}]$ for the survival probability. Therefore the problem of finding explicit formulas for reaction probabilities reduces to the problem of evaluating $\mathbb{E}_x$-moments of $\ell_\mathcal{C}(T_a)$. There is a method for dealing with this issue in considerable generality, called Kac's moment formula, which we explain in the next section. 
\end{remark}

\section{Explicit formulas for reaction probability} 
We now focus on  the dependence on $\kappa$ of the conversion probability $\alpha_\kappa(x)=1-\psi_k(x)$ for the reaction-diffusion process on a metric graph $\Gamma$, where the active sites consist of a set of vertices $\mathcal{C}\subset \mathcal{V}$.  In this case, it is possible to obtain reasonably explicit formulas for $\alpha_\kappa(x)$ by simple arguments using the strong Markov property. First we consider the case where $\mathcal{C}$ is a single vertex:

\begin{proposition}\label{simpleprop}
Suppose $\mathcal{C}=\{c\}$ and $T_a=\inf\left\{t>0\,:\,X_t\in\mathcal{V}_a\right\}$. Set $\lambda:=\mathbb{E}_{c}\left[\ell_c(T_a)\right]$ = the expected local time accumulated at $c$ up to time $T$, starting from $c$. Then
\begin{equation}\label{simple} {\alpha_\kappa(x)}={\alpha_\infty(x)}\frac{\lambda \kappa}{1+\lambda\kappa}.\end{equation}
\end{proposition}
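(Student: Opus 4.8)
The plan is to reduce everything to the single quantity $\mathbb{E}_c[\exp\{-\kappa\ell_c(T_a)\}]$ by a strong Markov argument, and then to exploit the fact that, started from $c$, the total local time $\ell_c(T_a)$ accumulated before absorption is exponentially distributed. Throughout I write $\psi_\kappa(x)=\mathbb{E}_x[\exp\{-\kappa\ell_c(T_a)\}]$ for the survival function, so that $\alpha_\kappa(x)=1-\psi_\kappa(x)$, and I recall that $\alpha_\infty(x)=P_{\mathrm{h}}(x)=\mathbb{P}_x[S<T_a]$ is the probability of ever reaching $c$ before exit, where $S=\inf\{t>0:X_t=c\}$.

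First I would establish the factorization $\alpha_\kappa(x)=\alpha_\infty(x)\,\alpha_\kappa(c)$. The point is that the local-time measure $d\ell_c$ is carried by $\{c\}$, so no local time is accumulated before $S$; in particular $\ell_c(S)=0$ on $\{S<T_a\}$, and $\ell_c(T_a)=0$ on $\{S\geqslant T_a\}$. Splitting on these two events gives
\[ \psi_\kappa(x)=\mathbb{P}_x[S\geqslant T_a]+\mathbb{E}_x\!\left[\mathbbm{1}_{\{S<T_a\}}\exp\{-\kappa\ell_c(T_a)\}\right]. \]
On $\{S<T_a\}$ the process sits at $c$ at time $S$, and $\ell_c(T_a)=\ell_c(T_a)-\ell_c(S)$ depends only on the post-$S$ path; applying the strong Markov property at $S$ turns the second term into $\mathbb{P}_x[S<T_a]\,\psi_\kappa(c)=\alpha_\infty(x)\,\psi_\kappa(c)$. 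Since $\mathbb{P}_x[S\geqslant T_a]=1-\alpha_\infty(x)$, this yields $\psi_\kappa(x)=1-\alpha_\infty(x)+\alpha_\infty(x)\psi_\kappa(c)$, and subtracting from $1$ gives $\alpha_\kappa(x)=\alpha_\infty(x)\bigl(1-\psi_\kappa(c)\bigr)=\alpha_\infty(x)\,\alpha_\kappa(c)$.

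It then remains to evaluate $\alpha_\kappa(c)$, and for this I would show that $\ell_c(T_a)$ is exponential under $\mathbb{P}_c$ with mean $\lambda$. Let $\tau_s=\inf\{t:\ell_c(t)>s\}$ be the inverse local time. On $\{\tau_s<T_a\}=\{\ell_c(T_a)>s\}$ the process is again located at $c$ at time $\tau_s$ (since $d\ell_c$ lives on $\{c\}$), and $\ell_c(T_a)-s$ depends only on the path after $\tau_s$; the strong Markov property at $\tau_s$ then gives the memoryless identity $\mathbb{P}_c[\ell_c(T_a)>s+u]=\mathbb{P}_c[\ell_c(T_a)>s]\,\mathbb{P}_c[\ell_c(T_a)>u]$. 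This forces $\mathbb{P}_c[\ell_c(T_a)>s]=e^{-s/\lambda}$ with $\lambda=\mathbb{E}_c[\ell_c(T_a)]$, so that
\[ \psi_\kappa(c)=\mathbb{E}_c\bigl[e^{-\kappa\ell_c(T_a)}\bigr]=\int_0^\infty e^{-\kappa s}\,\lambda^{-1}e^{-s/\lambda}\,ds=\frac{1}{1+\lambda\kappa}. \]
Hence $\alpha_\kappa(c)=1-(1+\lambda\kappa)^{-1}=\lambda\kappa/(1+\lambda\kappa)$, and combining with the factorization gives the claimed formula.

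Both steps rest on the same structural fact — that $d\ell_c$ is supported on $\{c\}$, so that at the hitting time $S$ and at every inverse local time $\tau_s$ the process is at $c$ and carries no leftover local time — together with the strong Markov property applied at these stopping times. The main obstacle I expect is the rigorous justification of the memoryless identity: one must check that $\tau_s$ is a stopping time, that $X_{\tau_s}=c$ on $\{\tau_s<T_a\}$, and that $\ell_c$ regenerates cleanly at $\tau_s$. These are standard properties of local time for one-dimensional (and hence graph) diffusions, but verifying them carefully in the metric-graph setting is where the real work lies; the factorization and the final Laplace-transform evaluation are then routine.
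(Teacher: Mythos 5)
Your proof is correct and follows essentially the same route as the paper's: the same splitting on $\{T_c<T_a\}$ with the strong Markov property at the hitting time of $c$ to get $\alpha_\kappa(x)=\alpha_\infty(x)\bigl(1-\mathbb{E}_c[e^{-\kappa\ell_c(T_a)}]\bigr)$, followed by the exponential law of $\ell_c(T_a)$ under $\mathbb{P}_c$ and its Laplace transform. The only difference is that you sketch the memorylessness argument (strong Markov at the inverse local time $\tau_s$) for that exponential law, which the paper simply cites from the literature \cite{MR06}.
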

\begin{proof}
Write $T_c:=\inf \{t:X_t = c\}$, the first time that $X_t$ hits the active vertex. Since $\ell_c(T_a)=0$ on the event  $(T_c \geqslant T_a)$, we have 
\begin{equation}\label{pxc}
\begin{aligned}
\alpha_\kappa(x)&=\mathbb{E}_{x}\left[\left(1-\exp\left\{ -\kappa \ell_c(T_a)\right\} \right)\mathbbm{1}_{(T_c<T_a)}\right]\\
&=\alpha_\infty(x)-\mathbb{E}_{x}\left[\exp\left\{ -\kappa \ell_c(T_a)\right\} \mathbbm{1}_{(T_c<T_a)}\right]
\end{aligned}
\end{equation}
where $\alpha_\infty(x)=\mathbb{E}_{x}[\mathbbm{1}_{(T_c<T_a)}]$ is the probability that $X_{t}$ hits
$c$ before  $\mathcal{V}_a$. Also, since $\ell_c(t)$ does not start increasing until $T_c$, we have that $\ell_c(T_a)=\ell_c(T_a)\circ\theta_{T_c}$ on $(T_c<T_a)$. Using the strong Markov property, and then the fact that $X(T_c)=c$ a.s., we find that the right-most term in the second line of \eqref{pxc} equals: 
\begin{equation*}\begin{aligned}
\mathbb{E}_{x}\left[\exp\left\{-\kappa\ell_c(T_a)\circ\theta_{T_c}\right\}\mathbbm{1}_{(T_c<T_a)}\right] &=
\mathbb{E}_x\left[\mathbb{E}_{X(T_c)}[\exp\{-\kappa\ell_c(T_a)\}]\mathbbm{1}_{(T_c<T_a)}\right]\\
&= \mathbb{E}_c[\exp\{-\kappa\ell_c(T_a)\}]\mathbb{E}_x[\mathbbm{1}_{(T_c<T_a)}]
\end{aligned}\end{equation*} We arrive at 
\begin{equation}\label{prefactor}\alpha_\kappa(x)=\alpha_\infty(x)\left(1-\mathbb{E}_c\left[\exp\left\{ -\kappa \ell_c(T_a)\right\} \right]\right).\end{equation}
Now, we use the fact that {\em $\ell_c(T_a)$ must have an exponential distribution under $\mathbb{P}_c$. } This follows from a simple argument which can be found on p. 106 of \cite{MR06}. Writing $\lambda:=\mathbb{E}_c\left[\ell_c(T_a)\right]$ for the mean, and then explicitly computing the Laplace transform, we obtain the result 
$$
\mathbb{E}_c\left[\exp\left\{ -\kappa \ell_c(T_a) \right\} \right]=\frac{1}{1+\lambda\kappa}.
$$
Inserting this last expression into \eqref{prefactor} establishes \eqref{simple}.
\end{proof}

When $\mathcal{C}$ consists of more than one point, the survival function takes the form \[ \psi_\kappa(x) = \mathbb{E}_x\left[ \exp\left\{  -\kappa\sum_{c\in\mathcal{C}} \ell_c(T_a)\right\}\right] \] where $\ell_c(T_a)$ is the local time at the vertex $c$ up to the exit time $T_a$. In this case, it is still possible to obtain an explicit formula for $\psi_\kappa(x)$ (hence $\alpha_\kappa(x)$) because Kac's moment formula can be used to evaluate the expectation appearing in $\psi_\kappa(x)$. 

Specifically, we can use the following corollary of Kac's moment formula, which can be found in Section 6 of \cite{FP99}:

\begin{proposition} Write $\mathcal{C}=\{c_j\}_{j=1}^C$ with $C=\#\mathcal{C}$ and define the Green's function by \[ G_{ij} := g(c_i,c_j)=\mathbb{E}_{c_i}\left[ \ell_{c_j}(T_a) \right]. \] Let $\kappa_i=\kappa(c_i)$ be a positive function on $\mathcal{C}$. Then the function \[ \psi_\kappa(c_i) := \mathbb{E}_{c_i}\left[\exp\{-\sum_{j=1}^C \kappa_j \ell_{c_j}(T_a)\}\right] \] is the unique solution to the system of equations 
\[ \psi(c_i) = 1 - \sum_{j=1}^C G_{ij}\kappa_j \psi(c_j).\] In other words, if $M_\kappa$ is the diagonal matrix with the entries of $\kappa_j$ along the main diagonal, then \begin{equation}\label{conversion1} \psi_\kappa = (I + GM_\kappa)^{-1}\mathbbm{1}_\mathcal{C}.\end{equation} \end{proposition}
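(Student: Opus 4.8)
The plan is to derive the linear system directly from a Feynman--Kac-type integration-by-parts identity for the increasing functional $A_t := \sum_{j}\kappa_j\,\ell_{c_j}(t)$, and then to settle uniqueness by a spectral argument. Throughout I write $\psi(c_i)=\mathbb{E}_{c_i}[e^{-A_{T_a}}]$.

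First I would integrate by parts in the Stieltjes sense. Since $A$ is continuous and increasing with $A_0=0$, the function $s\mapsto e^{-(A_{T_a}-A_s)}$ increases from $e^{-A_{T_a}}$ at $s=0$ to $1$ at $s=T_a$, so
\[ e^{-A_{T_a}} = 1 - \int_0^{T_a} e^{-(A_{T_a}-A_s)}\, dA_s = 1 - \sum_j \kappa_j \int_0^{T_a} e^{-(A_{T_a}-A_s)}\, d\ell_{c_j}(s). \]
Taking $\mathbb{E}_{c_i}$ reduces the claim to showing, for each $j$, that
\[ \mathbb{E}_{c_i}\!\left[ \int_0^{T_a} e^{-(A_{T_a}-A_s)}\, d\ell_{c_j}(s)\right] = G_{ij}\,\psi(c_j). \]
The crux is that the integrator $d\ell_{c_j}$ is carried by $\{s:X_s=c_j\}$, whereas the exponent $A_{T_a}-A_s=\int_s^{T_a}dA_u$ is a functional of the post-$s$ trajectory. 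Conditioning on $\mathcal{F}_s$ and using the Markov property, on $\{X_s=c_j,\,s<T_a\}$ one has $\mathbb{E}_{c_i}[e^{-(A_{T_a}-A_s)}\mid\mathcal{F}_s]=\psi(c_j)$, so the exponential may be replaced by the constant $\psi(c_j)$ under the local-time integral; the remaining integral is $\mathbb{E}_{c_i}[\ell_{c_j}(T_a)]=G_{ij}$. This yields $\psi(c_i)=1-\sum_j G_{ij}\kappa_j\psi(c_j)$, i.e. $(I+GM_\kappa)\psi=\mathbbm{1}_\mathcal{C}$.

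The main obstacle is making the interchange of conditional expectation and the local-time integral rigorous, since $\ell_{c_j}$ increases on an uncountable (Lebesgue-null) random set, so the strong Markov property cannot simply be applied at a single stopping time. I would justify it either by the predictable projection theorem --- $\ell_{c_j}$ is continuous, hence predictable, so $\mathbb{E}_{c_i}[\int H_s\,d\ell_{c_j}(s)]=\mathbb{E}_{c_i}[\int {}^{p}\!H_s\,d\ell_{c_j}(s)]$ with ${}^{p}\!H_s=\psi(X_s)$ on $\{s<T_a\}$ --- or, more elementarily, by approximating $d\ell_{c_j}(s)$ with the normalized occupation measure $\delta^{-1}\mathbbm{1}_{\{d(X_s,c_j)<\delta\}}\,ds$, applying Fubini and the ordinary Markov property at each fixed time $s$, invoking continuity of $\psi$ near $c_j$, and letting $\delta\downarrow 0$. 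Continuity of $\psi$ on $\Gamma$ follows from elliptic regularity for the equation $L_\Gamma\psi=0$ off $\mathcal{C}$ together with the vertex conditions.

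Finally, for uniqueness it suffices to prove that $I+GM_\kappa$ is invertible. Here I would use that $G$ is, up to a positive diagonal scaling reflecting the local-time normalization, the Green's matrix of the symmetric diffusion $X$: writing $G=\tilde G W$ with $W=\mathrm{diag}(w_j)$, $w_j>0$, and $\tilde G_{ij}=\tilde g(c_i,c_j)$ the symmetric Green's density with respect to the reversible reference measure, the kernel $\tilde g$ is positive definite (the functions $\tilde g(c_i,\cdot)$ are linearly independent in the Dirichlet-energy space), so $\tilde G$ is symmetric positive definite. Since $WM_\kappa$ is a positive diagonal matrix, $GM_\kappa=\tilde G(WM_\kappa)$ is similar to the symmetric positive definite matrix $(WM_\kappa)^{1/2}\tilde G(WM_\kappa)^{1/2}$; hence its spectrum is positive and $I+GM_\kappa$ has spectrum in $(1,\infty)$, so it is invertible. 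Thus $\psi=(I+GM_\kappa)^{-1}\mathbbm{1}_\mathcal{C}$ is the unique solution, as claimed.
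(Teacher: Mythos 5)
Your proof is correct, but note that the paper offers no proof of its own here: the proposition is quoted verbatim from Section 6 of \cite{FP99} as a corollary of Kac's moment formula. Your derivation of the linear system is in substance the direct argument underlying that citation: the pathwise identity $e^{-A_{T_a}}=1-\int_0^{T_a}e^{-(A_{T_a}-A_s)}\,dA_s$, followed by replacing $e^{-(A_{T_a}-A_s)}$ by its optional projection $\psi(X_s)\mathbbm{1}_{\{s<T_a\}}$ under the integrator $d\ell_{c_j}$; for that projection you should state explicitly that $T_a$ is a terminal time and $A$ an additive functional, so that $A_{T_a}-A_s=A_{T_a}\circ\theta_s$ on $\{s<T_a\}$, and that the projection theorem applies because the non-adapted process $H_s=e^{-(A_{T_a}-A_s)}\mathbbm{1}_{\{s<T_a\}}$ is bounded while $\ell_{c_j}$ is adapted and continuous, hence optional. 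This route is arguably cleaner than reading the result off Kac's moment formula, which produces the Neumann series $\sum_n(-GM_\kappa)^n\mathbbm{1}_\mathcal{C}$ and therefore needs a convergence or continuation step once the spectral radius of $GM_\kappa$ exceeds $1$, whereas your argument gives $\psi+GM_\kappa\psi=\mathbbm{1}_\mathcal{C}$ directly for all $\kappa\geqslant 0$. Your uniqueness argument is a genuine addition\----the paper asserts uniqueness without comment\----and it is sound, but its hypotheses deserve to be verified in this setting: the graph diffusion with vertex weights $p_v(e)$ is symmetric with respect to $\mu=\sum_e r_e^{d-1}\,\mathrm{Leb}_e$; the Green's kernel $\tilde g$ of the process killed at $\mathcal{V}_a$ is then symmetric and positive definite at distinct points $c_1,\dots,c_C\notin\mathcal{V}_a$ (each $\tilde g(c_i,\cdot)$ is edgewise affine with a flux defect only at $c_i$, so the $\tilde g(c_i,\cdot)$ are linearly independent); and the paper's normalization $\ell_v(t)=\mathcal{D}^{-1}\lim_{\delta\downarrow 0}\delta^{-1}\int_0^t\mathbbm{1}_{B(v,\delta)}(X_s)\,ds$ equals the occupation density with respect to $\mu$ times the positive factor $w_v=\mathcal{D}^{-1}\sum_{e:s(e)=v}r_e^{d-1}$, which is exactly your decomposition $G=\tilde GW$. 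One last loose end worth a line: finiteness of $G_{ij}$, which holds here because the graph is bounded and the exit set is hit with finite mean time, so $\mathbb{E}_{c_i}[\ell_{c_j}(T_a)]<\infty$.
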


\begin{remark} The expression \eqref{conversion1} for the survival function on $\mathcal{C}$ can be recast as a formula involving determinants by using Cramer's rule. To wit, let $G^{(j)}$ denote the matrix obtained from $G$ by subtracting row $j$ from every row. (So, $G^{(j)}$ has a row of 0's in the $j$-th row.) Then \begin{equation}\label{conversion2} \psi_\kappa(c_j) = \frac{\det(I+G^{(j)}M_\kappa)}{\det(I+GM_\kappa)}.\end{equation} See \cite{MR06} Ch. 2 for additional details about \eqref{conversion2}.  \end{remark} 

Now we can repeat the same argument that was given in Proposition \ref{simpleprop}, replacing the single point $c$ with the set $\mathcal{C}$, so that $T_c$ becomes $T_\mathcal{C}=\inf\{t>0 : X_t\in\mathcal{C}\}$. Then \eqref{pxc} takes the form 
\[ \alpha_\kappa(x)=\alpha_\infty(x)-\mathbb{E}_{x}\left[\exp\left\{ -\kappa \ell_\mathcal{C}(T_a)\right\}\circ \theta_{T_\mathcal{C}} \mathbbm{1}_{(T_\mathcal{C}<T_a)}\right] \] where we abbreviate $\sum_{j=1}^C\ell_{c_j}(T_a)$ as $\ell_\mathcal{C}(T_a)$. To deal with the expectation in the last display, split the event $(T_\mathcal{C}<T_a)$ as $\bigcup_{j=1}^C (X(T_\mathcal{C})=c_j, T_\mathcal{C}<T_a)$, and then write \[ p_j(x)=\mathbb{P}_x[X(T_\mathcal{C})=c_j\,|\, T_\mathcal{C}<T_a].\] Thus, $p_j(x)$ is the probability of starting from $x$ and first hitting $\mathcal{C}$ at $c_j$, conditional on hitting $\mathcal{C}$ at all.  We get 
\begin{equation*} \begin{aligned} 
\mathbb{E}_{x}\left[\exp\left\{ -\kappa \ell_\mathcal{C}(T_a)\right\}\circ \theta_{T_\mathcal{C}} \mathbbm{1}_{(T_\mathcal{C}<T_a)}\right]  &= \sum_{j=1}^C \mathbb{P}_x[X(T_\mathcal{C})=c_j,\,T_\mathcal{C}<T_a]\mathbb{E}_{c_j} \left[ \exp\{-\kappa\ell_\mathcal{C}(T_a)\right] \\
&= \sum_{j=1}^C p_j(x) \mathbb{P}_x[T_{\mathcal{C}}<T_a]\mathbb{E}_{c_j}[\exp\{-\kappa\ell_\mathcal{C}(T_a)\}]
 \end{aligned}\end{equation*} Since $\mathbb{P}_x[T_\mathcal{C}<T_a]=\alpha_\infty(x)$, we can apply \eqref{conversion2} with $\kappa(c_j)$ all being the same constant $\kappa$ to express this last equation in the simple form \begin{equation}\label{conversion3} \alpha_\kappa(x) = \alpha_\infty(x)\left[ 1 - \sum_{j=1}^C p_j(x) \frac{\det\left(I+\kappa G^{(j)}\right)}{\det(I+\kappa G)}\right].\end{equation}
In the numerator, the determinant is a polynomial with degree $\leqslant C-1$, because $\kappa$ appears in only $C-1$ rows of the matrix. On the other hand the denominator is a polynomial with degree $\leqslant C$. Furthermore the coefficients depend only on the $G_{ij}$, which in turn depend only on the geometric properties of $\Gamma$. This explains the claim made in Theorem \ref{theorem1}.

\section{Remark concerning the examples} \label{examples} In this section we present two slightly different approaches to computing the reaction probabilities. In both subsections, we let $X=(X_t)$ be a diffusion process on $\Gamma$ as described in section \ref{fatgraph}. For simplicity, we'll assume that $X$ is a Brownian motion with coefficient $\mathcal{D}$, so that the operator $L_\Gamma$ acts as $L_\Gamma f(x) = \mathcal{D}\frac{d}{dx^2} f_e(x)$ for $x\in E_e^\circ$. This $X$ is a conservative diffusion process (i.e. without killing) which corresponds to the limit of a conservative diffusion process on $U^\epsilon$ as $\epsilon\downarrow 0$. 

\subsection{Method 1}
Previously, we explained how Kac's moment formula can be used to express the reaction probabilities in terms of certain polynomials involving the coefficients $G_{ij} = \mathbb{E}_{c_i}[\ell_{c_j}(T_a)]$. What remains, then, is to compute the $G_{ij}$'s from the graph and diffusion coefficients. To this end we apply the graph stochastic calculus from \cite{FS00}. 

 Then we have, for each $v\in\mathcal{V}$, a process $\ell_v(t)$ which is adapted to the filtration of $X$, continuous, increasing, and increases only on $\{t \, : \, X_t\in v\}$. As explained in \cite{FS00}, $\ell_v(t)$ can be recovered from $X$ by the formula \[ \ell_v(t) = \frac{1}{\mathcal{D}} \lim_{\delta\downarrow 0} \frac{1}{\delta} \int_0^t \mathbbm{1}_{B(v,\delta)}(X_s)\,ds \] where $B(v,\delta)$ is a ball around $v$ of radius $\delta$. Also, let $C^2_b(\Gamma)$ be the set of $f\in C(\Gamma)$ with two continuous derivatives in $\Gamma^\circ$. (The derivatives don't have to extend to be continuous at the vertices.) Then \cite{FS00} gives the following version of the Ito-Tanaka formula for $\Gamma$:

\begin{theorem} Let $F\in C^2_b(\Gamma)$. Define, for each $v\in\mathcal{V}$, \[ \rho_v(F) = \sum_{e\in\mathcal{E} : s(e) = v} p_v(e)(D_e F)(v).\] Then \[ F(X_t) - F(X_0) = M_t + \int_0^t (L_\Gamma F)(X_s)\,ds + \sum_{v\in\mathcal{V}} \rho_v(F) \ell_v(t) \]  where $M_t$ is a continuous local martingale.  \end{theorem}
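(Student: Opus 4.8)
The plan is to combine the classical one-dimensional Itô formula on the edge interiors with a local-time analysis at the vertices, the two being glued together by the occupation-density characterization of $\ell_v$. Away from $\mathcal V$ the process $X$ is, in each edge coordinate $y_e$, a regular diffusion whose generator acts as $\mathcal D\,d^2/dy_e^2$, and $F_e\in C^2_b(E_e^\circ)$; so on any excursion of $X$ inside a single $E_e^\circ$ the usual Itô formula gives $dF(X_t)=F_e'(X_t)\,dX_t^{\mathrm{mart}}+(L_\Gamma F)(X_t)\,dt$, which already accounts for the continuous local martingale $M_t$ and the absolutely continuous term $\int_0^t(L_\Gamma F)(X_s)\,ds$. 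The value of $L_\Gamma F$ at the vertices is irrelevant here, since $\{t:X_t\in\mathcal V\}$ has zero Lebesgue measure. Thus the entire content of the theorem is the identification of the correction produced at the vertices, where $F$ need not be differentiable, with $\sum_v\rho_v(F)\,\ell_v(t)$.

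To isolate that correction I would localize at a single vertex $v$ and decompose $F$, in a small neighborhood of $v$, as $F=g+R$, where on each incident edge $g(\gamma_e(s))=F(v)+(D_eF)(v)\,s$ is the piecewise-linear ``kink'' and $R$ is the remainder. By Taylor's theorem $R_e(s)=O(s^2)$ with $R_e'(0)=0$, so $R$ is continuously differentiable across $v$ with all directional derivatives vanishing; consequently $\rho_v(R)=\sum_{e:s(e)=v}p_v(e)(D_eR)(v)=0$, and the Itô expansion of $R$ produces no local time at $v$ (only martingale and drift contributions, controlled exactly as on the edge interiors). All of the vertex defect is therefore carried by the linear part $g$, whose one-sided derivatives reproduce $\rho_v(F)=\sum_{e:s(e)=v}p_v(e)(D_eF)(v)$.

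It remains to show that the linear kink contributes precisely $\rho_v(F)\,\ell_v(t)$. Near $v$ the process is a star (Walsh-type) diffusion on the rays issuing from $v$ with routing weights $p_v(e)$, and for the linear function $g$ the classical Itô--Tanaka formula on each ray gives, besides the martingale term $(D_eF)(v)\,dX^{\mathrm{mart}}$, a contribution supported on $\{X_t=v\}$. I would evaluate this contribution using the occupation-density formula $\ell_v(t)=\tfrac1{\mathcal D}\lim_{\delta\downarrow0}\tfrac1\delta\int_0^t\mathbbm 1_{B(v,\delta)}(X_s)\,ds$: the symmetric difference of the one-sided slopes of $g$ across $v$, weighted against the occupation measure of $B(v,\delta)$, converges, and the weights $p_v(e)$ --- which are exactly the asymptotic exit distributions $\lim_\delta\mathbb P_v[X(\tau_\delta)\in E_e]$ from the Remark under Theorem \ref{diffusionlimit} --- combine the slopes into $\sum_{e:s(e)=v} p_v(e)(D_eg)(v)=\rho_v(F)$. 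Since the local times at distinct vertices increase on the disjoint sets $\{X_t=v\}$ and $\mathcal V$ is finite, summing the single-vertex identities over $v\in\mathcal V$ and adding the edge contributions yields the stated formula.

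The hard part will be making the vertex computation rigorous: the individual first-order terms coming from the separate rays near $v$ diverge as $\delta\downarrow0$, and only the weighted combination dictated by $p_v(e)$ stays finite and converges to $\rho_v(F)\,\ell_v(t)$. Controlling these cancellations --- equivalently, proving a one-sided Itô--Tanaka formula for the star diffusion with the correct local-time normalization --- is the crux. A clean alternative is to first establish the formula for $F$ in the generator domain $\mathcal D(L_\Gamma)$, where $\rho_v(F)=0$ for every $v$ and $F(X_t)-F(X_0)-\int_0^t L_\Gamma F\,ds$ is a martingale by the martingale-problem characterization of $X$, and then reduce the general $C^2_b(\Gamma)$ case to it by subtracting, for each $v$, an explicit test function $\phi_v\in C^2_b(\Gamma)$ with $\rho_{v'}(\phi_v)=\delta_{vv'}$ whose own local-time contribution $\ell_v(t)$ can be computed directly from the occupation formula.
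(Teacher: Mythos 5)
First, a point of comparison: the paper does not prove this statement at all --- it is imported verbatim from \cite{FS00} (``Then \cite{FS00} gives the following version of the Ito-Tanaka formula for $\Gamma$''). So there is no internal proof to match; the question is whether your sketch would stand on its own as a proof of the cited result. Your architecture --- classical It\^o calculus on edge interiors, a kink-plus-remainder decomposition $F=g+R$ at each vertex, and identification of the kink's contribution with $\rho_v(F)\,\ell_v(t)$ --- is indeed how proofs of the Freidlin--Sheu formula are organized, so the strategy is sound.

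However, as a proof the proposal has a genuine gap, which you yourself flag: the entire content of the theorem is the vertex identification, and that step is asserted rather than established. Concretely: (i) there is no ``classical It\^o--Tanaka formula on each ray'' to invoke, because the restriction of the kink $g$ to a single ray, extended by zero to the other rays, is itself a function with a kink at $v$; the process does not remain on one ray near $v$, so the problem cannot be decomposed ray-by-ray and is irreducibly a star-graph (Walsh) computation, requiring excursion theory or a careful upcrossing/approximation argument. Moreover, part of the crux is matching the constant: one must show that the Tanaka-type term produced at $v$ equals $\rho_v(F)$ times the local time in the paper's specific normalization $\ell_v(t)=\frac{1}{\mathcal D}\lim_{\delta\downarrow 0}\frac{1}{\delta}\int_0^t \mathbbm{1}_{B(v,\delta)}(X_s)\,ds$, and that the weights appearing are the exit probabilities $p_v(e)$; none of this is carried out. (ii) The claim that the remainder $R$, with $R_e'(0)=0$ and $R_e(s)=O(s^2)$, produces no vertex term also needs an argument (e.g., bounding its increment per excursion into $B(v,\delta)$ by $O(\delta^2)$ against an excursion count of order $\ell_v(t)/\delta$); plausible, but missing. (iii) The ``clean alternative'' is partly circular and partly incorrect as stated: computing the local-time contribution of the test function $\phi_v$ ``directly from the occupation formula'' is exactly the same hard vertex computation, and $\rho_{v'}(F_0)=0$ for all $v'$ does not place $F_0$ in $\mathcal{D}(L_\Gamma)$, since the domain also requires the one-sided limits of $L_eF_0$ to exist and agree at each vertex (condition 2 in the paper's definition), so the reduction to the martingale-problem case needs an additional approximation step. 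In short: right outline, but the theorem's essential content is deferred; to use this result one should either cite \cite{FS00}, as the paper does, or carry out the vertex excursion argument in full.
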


Ito's formula is all we need to find the $G_{ij}$. Namely, to find $\mathbb{E}_{c_i}[\ell_{c_j}(T_a)]$, we must find a function $F\in C^2_b(\Gamma)$ for which (i) $L_\Gamma F=0$ on $\Gamma^\circ$, (ii) $F(v)=0$ for $v\in\mathcal{V}_a$, and (iii) $\rho_v(F)=0$ except for $v=c_i,c_j$. In this case, (i) means that $F$ is affine on each edge, so that $F_e(y)=a_e y+b_e$ for certain constants $a_e$ and $b_e$ and $0\leqslant y\leqslant |e|$, where we abbreviate $|E_e|=|e|$ for the length of the segment $E_e$. Choosing an orientation arbitrarily for each $e$, this gives a total of $2E$ constants, with $E=\#\mathcal{E}$, which must be determined compatibly with (i)-(iii). This reduces the problem to a discrete one on the underlying combinatorial graph. This method is best illustrated by working a few examples. In the examples, when it is not necessary to give names to the edges, we write $[v_1,v_2]$ for the edge with $s(e)=v_1$ and $t(e)=v_2$.   

\begin{example} Consider the graph shown in the upper right of Figure \ref{diagrams}. Write $c$ for the active vertex in the middle, $a$ for the absorbing vertex at the far end, and $v_1,\ldots,v_n$ for the remaining inert vertices. For simplicity, we assume that the $p_c([c,v_j])$'s are all equal to $\frac{1}{n}$, i.e. that the relative radii were all equal. Starting from any $v_j$, the probability of hitting $c$ before $a$ is 1. So the conversion will have the form \[\alpha_\kappa(v_j) = \frac{\lambda\kappa}{1+\lambda\kappa}\] where $\lambda=\mathbb{E}_c[\ell_c(T_a)]$. Orient the various edges so that $c$ is the origin. Then we seek a function $F$ which is linear on each segment and continuous on $\Gamma$. The condition $\rho_{v_j}(F)=0$ forces $F$ to be constant on $[c,v_j]$ and we can choose this constant to be $l$, the length of $[c,a]$. The condition $F(a)=0$ means that $F$ is linear with slope $-1$ on $[c,a]$. In other words, $\rho_c(F)=\frac{-1}{n}$. Taking expectations in Ito's formula, \[ 0 - l = 0+\frac{-1}{n}\cdot \lambda \quad\Rightarrow \quad \lambda = nl.\] From this we obtain the formula given in item 2 in the list of examples after Theorem \ref{theorem1}: $$\alpha_\kappa(v_j)=\frac{nl\kappa}{1+nl\kappa}.$$ \end{example}

\begin{example} In a similar spirit, consider the second figure in the left column of Figure \ref{diagrams}. Let $v$ be the inactive vertex at the left, and $c_1,\ldots,c_m$ the active vertices, and $a$ the absorbing vertex at the right. Starting from $v$, $X$ hits $\mathcal{C}$ with probability $1$, and with probability $1$ this first hit occurs at $c_1$. Thus $p_1(v)=1$ and $p_j(v)=0$ for the other $c_j$ and formula \eqref{conversion3} simplifies to \[ \alpha_\kappa(v) =1 - \frac{\det(I + \kappa G^{(1)})}{\det(I+\kappa G)}.\] To simplify, we again assume that $p_{[c_j,c_k]}(e)=1/2$ for adjacent $c_j,c_k$ and $p_v([v,c_1])=1$. Now, to compute $G_{ij}$, first write $l_j$ for the length of the segment to the right of $c_j$. Then set $L_j=\sum_{i=j}^m l_i$. Thus, $L_k$ is the sum of the lengths of all the segments to the right of $c_k$.  Regarding $\Gamma$ as a single straight line, take $F$ to be the function which is constantly $L_j$ on $[0,c_j]$ and then decreases linearly with slope $-1$ on $[c_j,a]$, so that $F(a)=0$ and $\rho_{c_j}(F)=-1/2$. Using this $F$ in Ito's formula shows that: if $c_i\leqslant c_j$ then $G_{ij}=2L_j$, and if $c_i > c_j$ then $G_{ij} = 2L_i$. In particular, the matrix $G^{(1)}$ must be strictly triangular, so that the determinant in the numerator of $\alpha_\kappa(v)$ is $1$. Therefore we can write \[ \alpha_\kappa(v) = \frac{\det(I+\kappa G)-1}{\det(I+\kappa G)}\]
\end{example} 

\subsection{Method 2}

A different approach is to work instead with the process $Y=(Y_t)$ obtained from $X$ by sending it to $\triangle$ at $\zeta$, the first time that $\kappa \ell_\mathcal{C}(t)$ exceeds an independent rate 1 exponential. This new process $Y$ is a non-conservative diffusion whose generator still acts as $L_\Gamma$, but whose domain now consists of functions $F\in C^2(\Gamma)$ satisfying these vertex conditions:
\begin{equation}\label{vertices2} \begin{aligned} \sum_{e:s(e)=c} p_v(e)(D_e F)(c)&=\kappa F(c)\quad &c&\in\mathcal{C} \\
\sum_{e:s(e)=v} p_v(e)(D_eF)(v) &= 0 \quad &v&\in\mathcal{V}\setminus[\mathcal{C}\cup\mathcal{V}_a] \end{aligned}\end{equation} It follows that if $F\in C^2_b(\Gamma)$ satisfies $L_\Gamma F(x)=0$ in $\Gamma^\circ$, $F(a)=1$ for $a\in \mathcal{V}_a$, together with the vertex conditions \eqref{vertices2}, then $F(Y_t)$ is a martingale. By optional stopping, \[ F(v) = \mathbb{E}_v [ F( Y(T_a) ) ] = \mathbb{P}_v[ T_a < \zeta ] \] which means that $F(v)=\psi_\kappa(v)$, i.e. the survival function evaluated at $v$. 

Again we assume that $p_v(e)=1/\mathrm{deg}(v)$ whenever $v=s(e)$; equivalently, that all $r_e$'s are equal. Since $F$ must be affine on each edge, it is determined by its values on $\mathcal{V}$, and its derivative $DF$ can be regarded as a function on $\mathcal{E}$: \[ DF(e) = \frac{F(t(e))-F(s(e))}{|e|}.\] Therefore we can couch the problem of determining $F$ as a kind of discrete boundary problem on the combinatorial graph $\mathcal{G}$ rather than on the metric graph $\Gamma$. For this purpose, regard the vertices in $\mathcal{V}_a$ as the {\em boundary} of $\mathcal{G}$, and the remaining vertices as the {\em interior} of $\mathcal{G}$.  Then $F$ is determined by the equations
\begin{equation}
\sum_{e: s(e)=v}{|e|^{-1}}{F(r(e))}=\left({\text{deg}(v)\kappa}_v+\sum_{e: s(e)=v}{|e|^{-1}}\right)F(v)\label{keyequation}
\end{equation}
for interior vertices  and $f(v)=1$ for exit vertices, $v\in \mathcal{V}_a$. Reaction probability for  the examples given in the introduction are easily obtained by solving  the above system of linear equations.

\begin{remark} Equation \eqref{keyequation} can be regarded as a kind of discrete Feynman-Kac equation involving the discrete Laplacian on $\mathcal{G}$. See \cite{bk13} for more information.  \end{remark}
\bibliographystyle{plain}
\bibliography{./biblio}

\end{document}